\newtheorem{theorem}{Theorem}[section]
\newtheorem{proposition}[theorem]{Proposition}
\newtheorem{lemma}[theorem]{Lemma}
\newtheorem{corollary}[theorem]{Corollary}
\newtheorem{conjecture}[theorem]{Conjecture}
\newtheorem{definition}[theorem]{Definition}
\newcommand{\D}{{\rm disc}}
\begin{document}
	
\title{
	Packing and covering a given directed graph in a directed graph
}
	
\author{
	Raphael Yuster
	\thanks{Department of Mathematics, University of Haifa, Haifa 3498838, Israel. Email: raphael.yuster@gmail.com\;.}
}
	
\date{}
	
\maketitle
	
\setcounter{page}{1}
	
\begin{abstract}
	For every fixed $k \ge 4$, it is proved that if an $n$-vertex directed graph has at most $t$ pairwise arc-disjoint directed $k$-cycles, then there exists a set of at most $\frac{2}{3}kt+ o(n^2)$ arcs that meets all directed $k$-cycles and that the set of $k$-cycles admits a fractional cover of value at most $\frac{2}{3}kt$. It is also proved that the ratio $\frac{2}{3}k$ cannot be improved to a constant smaller than $\frac{k}{2}$. For $k=5$ the constant $2k/3$ is improved to $25/8$ and for $k=3$ it was recently shown by Cooper et al. that the constant can be taken to be $9/5$. The result implies a deterministic polynomial time $\frac{2}{3}k$-approximation algorithm for the directed $k$-cycle cover problem, improving upon a previous $(k{-}1)$-approximation algorithm of Kortsarz et al.
	
	More generally, for every directed graph $H$ we introduce a graph parameter $f(H)$ for which it is proved that if an $n$-vertex directed graph has at most $t$ pairwise arc-disjoint $H$-copies, then there exists a set of at most $f(H)t+ o(n^2)$ arcs that meets all $H$-copies and that the set of $H$-copies admits a fractional cover of value at most $f(H)t$. It is shown that for almost all $H$ it holds that $f(H) \approx |E(H)|/2$ and that for every $k$-vertex tournament $H$ it holds that $f(H) \le \lfloor k^2/4 \rfloor$.
	
\vspace*{3mm}
\noindent
{\bf AMS subject classifications:} 05C20, 05C35, 05C38, 68R10\\
{\bf Keywords:} cycle; approximation; covering; packing

\end{abstract}

\section{Introduction}

Let $H$ be a directed or undirected graph. For a directed (or undirected) multigraph $G$, let $\nu_H(G)$ denote the maximum number of pairwise arc-disjoint (edge-disjoint) copies of $H$ in $G$ and let  $\tau_H(G)$ denote the minimum number of arcs (edges) whose removal from $G$ results in a subgraph with no copies of $H$.
The fractional versions of these parameters (see Section \ref{sec:prelim} for a definition) are denoted by $\nu^*_H(G)$ and $\tau^*_H(G)$, respectively. It is readily observed that
$\tau_H(G) \ge \tau^*_H(G)=\nu^*_H(G) \ge \nu_H(G)$ and that $\tau_H(G) \le |E(H)|\nu_H(G)$.
These parameters can also be naturally extended to the weighted setting where each arc (edge) of $G$ is assigned a non-negative weight (see Section \ref{sec:prelim} for a definition).

The undirected case has substantial literature. The starting point of these problems is the well-known and yet unsolved conjecture of Tuza \cite{tuza-1990} asserting that $\tau_{C_3}(G)\le 2\nu_{C_3}(G)$ for every undirected graph $G$. Stated equivalently, the conjecture asserts that if a graph has at most $t$
pairwise edge-disjoint triangles, then it can be made triangle-free by removing at
most $2t$ edges. The best known upper bound is by Haxell \cite{haxell-1999} who proved
that $\tau_{C_3}(G)\le 2.87\nu_{C_3}(G)$. Krivelevich \cite{krivelevich-1995} proved  a
fractional version of Tuza's conjecture, namely that $\tau_{C_3}(G)\le 2\nu^*_{C_3}(G)$ (he also proved that $\tau^*_{C_3}(G)\le 2\nu_{C_3}(G)$).
It was later observed in \cite{yuster-2012} that using a method of Haxell and R\"odl \cite{HR-2001},
Krivelevich's result implies that Tuza's conjecture asymptotically holds in
the dense setting, specifically $\tau_{C_3}(G)\le 2\nu_{C_3}(G)+o(n^2)$ where $n$ is the number of the vertices of $G$. There are examples showing that the constant $2$ in Tuza's conjecture cannot be
replaced with a smaller one, even in the dense setting \cite{BK-2016}.

The aforementioned results concerning $C_3$ have some nontrivial generalizations to additional graphs.
In \cite{yuster-2012} the author proved that $\tau_{K_k}(G)\le \lfloor k^2/4 \rfloor \nu^*_{K_k}(G)$ 
and that $\tau_{K_k}(G)\le \lfloor k^2/4 \rfloor \nu_{K_k}(G)+o(n^2)$. This is presently the best
known upper bound for the case of $K_k$. Kortsarz, Langberg, and Nutov \cite{KLN-2010} proved
that $\tau_{C_k}(G)\le (k-1) \nu^*_{C_k}(G)$. Their main motivation came from the related well-known natural optimization problem.
\begin{definition}[The $H$-cover problem]
	Let $H$ be a fixed (directed) graph. Given a (directed) graph $G$, find a minimum size subset of edges (arcs) of $G$ whose removal results in an $H$-free subgraph of $G$.
\end{definition}
It is well-known \cite{yannakakis-1978} that $H$-cover is NP-hard already for some small $H$ (e.g. $H=K_3$) thus we seek a polynomial time approximation algorithm. One may similarly define the $H$-cover problem
in the weighted setting where the goal is to find a subset of edges (arcs) that covers all $H$-copies and
whose total weight is the minimum possible.
The proof in \cite{KLN-2010}, as well as Krivelevich's proof for $C_3$, give a polynomial
time $(k{-}1)$-approximation algorithm for $C_k$-cover.
Similarly, the proof in \cite{yuster-2012} can be shown to give a polynomial time $\lfloor k^2/4 \rfloor$-approximation algorithm for $K_k$-cover.

\vspace{0.3cm}
In this paper we consider the directed case, which has recently gained attention.
Already when posing his conjecture, Tuza \cite{tuza-1990} asked whether
$\tau_{\overrightarrow{C_3}}(D)\le 2\nu_{\overrightarrow{C_3}}(D)$ where $\overrightarrow{C_k}$
denotes the directed cycle on $k$ vertices and $D$ is a directed graph.
McDonald, Puleo and Tennenhouse \cite{MPT-2020} answered
Tuza's question affirmatively proving that $\tau_{\overrightarrow{C_3}}(D)\le 2\nu_{\overrightarrow{C_3}}(D)-1$ for any directed multigraph $D$. In fact, they conjectured that a significantly stronger variant of Tuza's conjecture holds in the $\overrightarrow{C_3}$ case.
Specifically, they conjectured that $\tau_{\overrightarrow{C_3}}(D)\le 1.5\nu_{\overrightarrow{C_3}}(D)$
for any directed multigraph $D$. They also gave an example showing that if true, the constant $1.5$ is best
possible.
Recently, Cooper et al. \cite{CGKK-2022} proved that the fractional version for
$\overrightarrow{C_3}$ satisfies a factor better than $2$. Specifically,
$\tau_{\overrightarrow{C_3}}(D)\le 1.8\nu_{\overrightarrow{C_3}}^*(D)$ for any
arc-weighted directed multigraph $D$. As in the undirected cases mentioned above, this also implies that
$\tau_{\overrightarrow{C_3}}(D)\le 1.8\nu_{\overrightarrow{C_3}}(D)+o(n^2)$ for any
unweighted directed graph $D$. In their paper \cite{KLN-2010} mentioned above, Kortsarz, Langberg, and Nutov
stated and showed that $\tau_{\overrightarrow{C_k}}(D)\le (k-1)\nu_{\overrightarrow{C_k}}(D)$ for
all $k \ge 3$ and that the $\overrightarrow{C_k}$-cover problem admits a polynomial time
$(k{-}1)$-approximation algorithm.

Our main result gives a general upper bound for $\tau_H(D)$ in terms of $\nu^*_H(D)$ that applies
to {\em any} fixed directed graph $H$ and to any directed weighted multigraph $D$.
However, as a special case of our result implies an improvement of the aforementioned result
for $\overrightarrow{C_k}$ for all $k \ge 4$, we prefer to first state our results for directed $k$-cycles.
To simplify some notation we use the subscript $k$ instead of the subscript $\overrightarrow{C_k}$
in the parameter definitions.

\begin{theorem}\label{t:main}
	If $D$ is an arc-weighted directed multigraph, then $\tau_k(D) \le (2k/3)\nu_k^*(D)$.
	For $k=5$ we further have $\tau_5(D) \le (25/8)\nu_5^*(D)$.
\end{theorem}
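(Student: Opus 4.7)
The plan is to combine LP duality with a threshold-rounding argument. By standard LP duality in the arc-weighted setting one has $\tau_k^*(D)=\nu_k^*(D)$, so it suffices to exhibit an integer arc-cover $F$ of weight at most $(2k/3)\,\tau_k^*(D)$.

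First I would fix an optimal fractional cover $y\colon E(D)\to[0,1]$ realizing $\tau_k^*(D)=\sum_e w(e)\,y(e)$, choose a threshold $\alpha\in(0,1)$, and place every arc with $y(e)\ge\alpha$ into a preliminary cover $F_0$. A Markov-type estimate yields $w(F_0)\le\tau_k^*(D)/\alpha$. Any directed $k$-cycle not killed by $F_0$ then lives in the ``light'' subgraph $D_L$ in which every arc satisfies $y(e)<\alpha$, and on such a cycle $C$ the inequality $\sum_{e\in C}y(e)\ge1$ combined with $y(e)<\alpha$ forces $\alpha>1/k$ and leaves substantial fractional mass on the arcs of $C$.

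Next I would handle the residual by an iterated primal--dual or LP-rounding routine on $D_L$: at each step raise a fractional packing variable on a surviving $k$-cycle until an arc becomes tight, add that arc to $F$, and charge its weight back to the fractional packing. The central claim to prove is that, under the threshold assumption $y(e)<\alpha$, on average at most a $(2/3)$-fraction of the $k$ arcs of any cycle are ever selected, so the residual cost is bounded by a $(2k/3)$-multiple of the packing consumed inside $D_L$. Balancing $1/\alpha$ against this residual factor and tuning $\alpha$ to equate the two costs then yields the $(2k/3)\,\tau_k^*(D)$ bound.

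For the sharper $k=5$ statement I would refine the residual analysis with a finer case distinction on directed $5$-cycles---for instance a secondary threshold capturing $5$-cycles with two medium-weight arcs, or exploiting the limited overlap patterns between pairs of directed $5$-cycles---to pull the constant from $10/3$ down to $25/8$. The principal obstacle is the residual step: a naive primal--dual only achieves factor $k$, so cutting the bound to $2k/3$ requires exploiting structural features of light $k$-cycles, specifically showing that they overlap enough that each arc added to $F$ discharges more than one cycle on average.
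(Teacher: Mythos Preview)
Your threshold step is fine and indeed mirrors the paper's Lemma~4.1 (which handles arcs with large dual value by induction rather than Markov, but the spirit is the same). The gap is entirely in your residual step: you acknowledge that a naive primal--dual gives only factor~$k$, and your proposal to improve this to $2k/3$ rests on an unproven assertion that light $k$-cycles ``overlap enough that each arc added to $F$ discharges more than one cycle on average.'' Nothing in the hypothesis $y(e)<\alpha$ forces any overlap between distinct $k$-cycles---one can have arbitrarily many pairwise arc-disjoint light $k$-cycles---so the mechanism you sketch cannot work as stated. The constant $2/3$ does not arise from overlap or discharging at all.

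The paper's argument on the residual is completely different and rests on a single combinatorial idea you are missing: a \emph{random vertex partition} into three ordered classes. Once every arc has $y(e)<\alpha$ with $\alpha=1/(k-\lceil k/3\rceil)$, partition $V(D)$ uniformly at random into $V_1,V_2,V_3$ and let $F$ consist of all arcs $e$ with $y(e)>0$ that are \emph{not} increasing with respect to this order (i.e.\ not going from $V_i$ to $V_j$ with $i<j$). Since the increasing arcs form a subgraph of the blowup $B(T_3)$, which is acyclic with longest directed path of length~$2$, every directed $k$-cycle must contain at least $\lceil k/3\rceil$ non-increasing arcs; the bound $y(e)<\alpha$ then forces at least one of them to have $y(e)>0$, so $F$ is a genuine cover. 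Each arc lands in $F$ with probability $1-3/9=2/3$, and by complementary slackness the arcs with $y(e)>0$ have total weight at most $k\,\nu_k^*(D)$, giving expected weight $\le (2k/3)\nu_k^*(D)$. For $k=5$ one uses four ordered classes (template $T_4$): the forward probability becomes $6/16$, the backward probability $5/8$, and $5\cdot 5/8=25/8$. No cycle-by-cycle charging, no secondary thresholds, no overlap analysis is needed.
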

\noindent Note that for $k=3$ the result in \cite{CGKK-2022} gives a better constant, but already for $k \ge 4$
this improves upon the state of the art. Our proof implies a deterministic approximation algorithm.
\begin{corollary}\label{coro:approx}
	The $\overrightarrow{C_k}$-cover problem (also in the weighted multigraph setting) admits a deterministic polynomial time $(2k/3)$-approximation algorithm. For $k=5$ the approximation ratio is $25/8$.
\end{corollary}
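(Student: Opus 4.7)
The plan is to reduce the approximation task to Theorem \ref{t:main} via LP duality and then observe that the proof of the theorem can be carried out in polynomial time. First I would write down the natural covering LP: one variable $x_a\ge 0$ per arc $a$ of $D$, a constraint $\sum_{a\in E(C)} x_a\ge 1$ for every directed $k$-cycle $C$ of $D$, and objective $\min\sum_a w(a)x_a$ (in the unweighted case take $w\equiv 1$). For fixed $k$ the digraph $D$ on $n$ vertices contains at most $O(n^k)$ directed $k$-cycles, so the LP has polynomial size and can be solved exactly in polynomial time, its optimum being $\tau_k^*(D)=\nu_k^*(D)$ by LP duality. Since any integer cover of weight $\mathrm{OPT}$ is in particular a feasible fractional cover, $\mathrm{OPT}=\tau_k(D)\ge \tau_k^*(D)=\nu_k^*(D)$.

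Next I would invoke Theorem \ref{t:main} to get a cover of weight at most $(2k/3)\nu_k^*(D)\le (2k/3)\,\mathrm{OPT}$, and the $k=5$ improvement $25/8$ analogously. The only remaining task is to produce such a cover algorithmically, rather than merely prove its existence. In the spirit of the LP-based rounding schemes behind the results of Krivelevich~\cite{krivelevich-1995} and Kortsarz--Langberg--Nutov~\cite{KLN-2010}, I expect the proof of Theorem~\ref{t:main} to single out a set of arcs using the fractional solution (together with structural information from the $\overrightarrow{C_k}$-copies of $D$) in a way that is directly implementable: one enumerates the $k$-cycles in time $O(n^k)$, inspects an optimal LP solution, and performs a deterministic selection, keeping the arcs that hit every $k$-cycle with total weight bounded by $(2k/3)\sum_a w(a)x_a$. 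Each such step is polynomial in $n$ for fixed $k$, so the overall algorithm is deterministic polynomial time.

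The main obstacle is therefore the constructiveness of Theorem~\ref{t:main} itself. If its proof proceeds by a weight-balancing or charging argument on the LP optimum, the algorithm is immediate: compute $x^*$, perform the prescribed selection, output the resulting arc set. If instead some step relies on a probabilistic existence argument (e.g., a random partition of the arc set into classes used by the rounding), one would have to derandomize it by the method of conditional expectations applied to the LP-weighted objective, which is possible because the bad events correspond to weighted sums over the polynomially many $k$-cycles and the variables involved take values in a polynomial-size domain. Either way, the output is an integer $\overrightarrow{C_k}$-cover of weight at most $(2k/3)\tau_k(D)$ (respectively $(25/8)\tau_5(D)$ for $k=5$), establishing the claimed approximation ratio.
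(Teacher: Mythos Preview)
Your proposal is correct and mirrors the paper's approach: solve the polynomial-size covering LP, then make the proof of Theorem~\ref{t:main} constructive, derandomizing the probabilistic step via conditional expectations. The only inaccuracy is cosmetic: the randomness in the paper's argument is a random partition of the \emph{vertex set} (into parts indexed by $V(L)$ for a suitable fixed $L$, namely $T_3$ in general and $T_4$ for $k=5$), not of the arc set; conditional expectations are then applied vertex by vertex, exactly as you anticipated.
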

\noindent As in \cite{CGKK-2022}, this will also imply a non-fractional result in the dense setting.
\begin{corollary}\label{coro:dense}
	If $D$ is an $n$-vertex directed graph, then $\tau_k(D) \le (2k/3)\nu_k(D)+o(n^2)$
	and $\tau_5(D) \le (25/8)\nu_5(D)+o(n^2)$.
\end{corollary}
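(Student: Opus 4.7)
The plan is to obtain Corollary~\ref{coro:dense} from Theorem~\ref{t:main} by bridging the integer--fractional gap in the packing number via a Haxell--R\"odl style rounding, exactly as was done for $\overrightarrow{C_3}$ in \cite{CGKK-2022} and for undirected $K_k$ in \cite{yuster-2012}. The reduction I would establish is the integrality-gap bound
\[
\nu_k^*(D) \le \nu_k(D) + o(n^2)
\]
for every $n$-vertex directed graph $D$, with $k$ fixed. Combining this with Theorem~\ref{t:main} immediately yields $\tau_k(D) \le (2k/3)\nu_k^*(D) \le (2k/3)\nu_k(D) + o(n^2)$, since multiplying the error term by the fixed constant $2k/3$ is still $o(n^2)$; the sharper $k=5$ assertion follows verbatim with $25/8$ in place of $2k/3$.

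To prove the integrality-gap bound I would adapt the Haxell--R\"odl argument \cite{HR-2001} to the directed setting. Given an optimal fractional $\overrightarrow{C_k}$-packing $w$ of $D$, split the total value into a \emph{heavy} part, supported on copies that share an arc used by $w$ above some small threshold $\delta$, and a \emph{light} part, supported on copies whose every arc is used below $\delta$. The heavy part is concentrated on only $O(n^{k-1})$ copies (since each heavy arc pins down at most $O(n^{k-2})$ $k$-cycles), so a direct greedy disjointification converts it to an integer packing with loss $o(n^2)$. For the light part one invokes the directed graph removal lemma of Alon--Shapira: either $D$ contains $\Omega(n^k)$ copies of $\overrightarrow{C_k}$, and a standard nibble/pigeonhole step extracts an arc-disjoint integer packing whose size matches the light fractional mass up to $o(n^2)$, or the light part already has total value $o(n^2)$ and can simply be discarded.

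The main obstacle is verifying that the Haxell--R\"odl machinery, originally written for undirected hosts and undirected $H$, transfers without enlarging the $o(n^2)$ error term when the host $D$ and the copied graph $\overrightarrow{C_k}$ are both directed. The directed removal lemma provides the necessary regularity input and the directed counting lemma handles the copy-counting, so no fundamentally new ingredient is required; the work is merely to check that arc orientations are respected at every step of the rounding and that the auxiliary hypergraph on $\overrightarrow{C_k}$-copies behaves as in the undirected case. Since precisely this transfer has been carried out implicitly for $k=3$ in \cite{CGKK-2022}, the extension to arbitrary $k$ should be mechanical.
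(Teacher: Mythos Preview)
Your proposal is correct and follows essentially the same route as the paper: combine Theorem~\ref{t:main} with the integrality-gap bound $\nu_k^*(D)\le \nu_k(D)+o(n^2)$. The paper does not redo the Haxell--R\"odl argument, however; it simply quotes this bound as Lemma~\ref{l:ny}, the Nutov--Yuster result~\cite{NY-2007}, which is exactly the directed extension of \cite{HR-2001} you are sketching, so no further work is required.
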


Given Theorem \ref{t:main} and its corollaries, it is of interest to ask whether the constant $2k/3$
(and $25/8$ when $k=5$) can be improved. We conjecture that it can.
\begin{conjecture}\label{conj:1}
	Let $k \ge 3$ be fixed. For all $n$ sufficiently large, if $D$ is an $n$-vertex directed graph,
	then $\tau_k(D) \le (k/2)\nu_k(D)$.
\end{conjecture}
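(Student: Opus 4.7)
The plan is to try to sharpen the rounding step underlying Theorem~\ref{t:main} so that the factor $2k/3$ is replaced by $k/2$, matching the lower bound described in the abstract. The $(2k/3)$-bound is obtained by starting from an optimal (arc-weighted) fractional cover $x^*$ of the directed $k$-cycles of $D$, produced by LP duality from $\nu_k^*$, and invoking a selection lemma that, for every directed $k$-cycle $C$, picks a set $S_C\subseteq E(C)$ of at most $\lceil 2k/3\rceil$ arcs capturing enough of the fractional mass of $C$; aggregating these choices yields a cover of total size about $(2k/3)\nu_k^*(D)$. The natural target $k/2$ is exactly what a clean ``average cyclic rotation'' argument would give, since a random rotation of any $\lceil k/2\rceil$ consecutive arcs around $C$ catches half of $\sum_{e\in C}x_e^*\ge 1$ in expectation.

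To realize this, I would proceed in three stages. First, reduce to the dense regime via the Haxell--R\"odl regularity machinery already used to derive Corollary~\ref{coro:dense} from Theorem~\ref{t:main}; after this reduction one may assume that every present directed $k$-cycle is a representative of $\Omega(n^{k-2})$ essentially parallel cycles, so the hypergraph of directed $k$-cycles is locally abundant and highly symmetric. Second, in this dense regime, replace the deterministic selection of $S_C$ by a randomized cyclic-rotation scheme: for each equivalence class of parallel cycles, pick uniformly at random one of the $k$ consecutive ``halves,'' and put the chosen half into the cover. A direct expectation computation should then yield a cover of size $(k/2)\nu_k(D)+o(n^2)$, provided the chosen halves always cover every cycle. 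Third, derandomize via the method of conditional expectations, using that in the dense regime each arc participates in many cycles so the relevant expectations decouple.

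The main obstacle, I expect, is the class of cycles whose fractional weight is concentrated on only a few arcs: for such ``spiky'' cycles no rotation of a $\lceil k/2\rceil$-subset achieves the required coverage, and the $(2k/3)$-argument handles them only by explicitly selecting the heavy arcs. To get to $k/2$, one seems to need a separate global exchange argument, trading a spiky cycle against the abundance of parallel cycles guaranteed by the regularity reduction so as to smooth its weight distribution along its arcs. Formulating and proving the required cycle-exchange lemma, and verifying that after smoothing the random-rotation scheme actually covers every cycle, is where I expect the bulk of the difficulty — and explains why Conjecture~\ref{conj:1} is only stated asymptotically and left open.
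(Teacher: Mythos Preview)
The statement you are addressing is \emph{Conjecture}~\ref{conj:1}, not a theorem; the paper does not prove it and explicitly leaves it open. There is therefore no proof in the paper to compare your proposal against, and your last paragraph already concedes this.

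Beyond that, your description of how the paper obtains the $2k/3$ bound is inaccurate, and this matters because your proposed sharpening is built on that description. The paper does \emph{not} use a per-cycle selection lemma picking $S_C\subseteq E(C)$. Rather, it fixes a small directed graph $L$ (for $\overrightarrow{C_k}$ one takes $L=T_3$), computes an optimal fractional cover $c$, peels off arcs with $c(e)\ge f(H,L)^{-1}$ via Lemma~\ref{l:upper}, and then randomly partitions $V(D)$ into $|V(L)|$ parts. The cover $F$ consists of all arcs $e$ with $c(e)>0$ that do not respect the arc structure of $L$ under this partition; Lemma~\ref{l:cover} shows $F$ is a genuine cover because after removing $F$ the remaining positive-weight arcs sit inside a blowup of $T_3$, and any $\overrightarrow{C_k}$ must use at least $\lceil k/3\rceil$ arcs outside such a blowup, which cannot all have $c(e)=0$. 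The factor $2/3$ is simply $1-|E(T_3)|/|V(T_3)|^2=1-3/9$, the probability a given arc fails to respect $L$. There is no ``aggregation of $S_C$'s.''

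Your proposed replacement --- choosing, for each (equivalence class of) cycle(s), a random rotation of $\lceil k/2\rceil$ consecutive arcs --- does not produce an $H$-cover in any obvious way: the union over all cycles is enormous, and there is no mechanism ensuring that a single global set of arcs hits every cycle. The paper's scheme works precisely because the partition is a single global object that simultaneously handles all cycles. Pushing the paper's method to $k/2$ would require an $L$ with $|E(L)|/|V(L)|^2\ge 1/2$ and $\D_{\overrightarrow{C_k}}(L)\ge k/2$, but Lemma~\ref{l:fas} already shows $f(H)\ge |E(H)|/2$, so no choice of $L$ can do better than $k/2$, and Proposition~\ref{p:cycles} shows $f(\overrightarrow{C_k})=2k/3$, so the method is genuinely stuck there. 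Any proof of Conjecture~\ref{conj:1} will need an idea outside this framework.
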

Note that the case $k=3$ of Conjecture \ref{conj:1} is the aforementioned conjecture of
McDonald, Puleo and Tennenhouse \cite{MPT-2020}. The constant $k/2$ in Conjecture \ref{conj:1} cannot be made smaller. In  fact, it cannot be made smaller even if the host graph is a regular tournament.
\begin{theorem}\label{t:lower}
	Let $k \ge 3$ be fixed. For all $n$ sufficiently large satisfying $n \equiv 1 \pmod {2k}$
	there is a regular $n$-vertex tournament $T$ such that $\nu_k(T)=\nu^*_k(T)=n(n-1)/2k$
	and $\tau_k(T) = n^2/4-o(n^2)$.
\end{theorem}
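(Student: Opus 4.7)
The plan is to construct $T$ as a Cayley tournament on $\mathbb{Z}_n$ via a connection set $S \subset \mathbb{Z}_n \setminus \{0\}$ of size $(n-1)/2$ with $S \cap (-S) = \emptyset$ (arcs being $u \to v$ iff $v - u \in S$), so that $T$ is regular. Writing $m = (n-1)/(2k)$, I insist that $S$ admits a partition $S = S_1 \sqcup \cdots \sqcup S_m$ with $|S_i| = k$, $\sum_{s \in S_i} s \equiv 0 \pmod n$ for each $i$, and (for some fixed ordering of each block) distinct partial sums modulo $n$. This algebraic condition yields an explicit directed $k$-cycle decomposition of $T$: for each $i$ and $v \in \mathbb{Z}_n$, the sequence $v,\, v+s_{i,1},\, v+s_{i,1}+s_{i,2}, \ldots$ closes into a directed $k$-cycle of $T$ (the closing arc by the zero-sum condition, distinct vertices by the partial-sum condition). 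As $i$ and $v$ range, these $nm = n(n-1)/(2k)$ cycles are arc-disjoint and partition $E(T)$: arcs from different blocks use disjoint difference sets, while within one block each arc $u \to u + s_{i,j}$ lies in a unique translate. Hence $\nu_k(T) \ge n(n-1)/(2k)$, and summing the arc constraints of the fractional packing LP gives $\nu_k^*(T) \le |E(T)|/k = n(n-1)/(2k)$, forcing equality throughout.

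For the covering lower bound, take any $F \subseteq E(T)$ with $T - F$ containing no directed $k$-cycle. By Moon's classical theorem that every strongly connected tournament on at least $3$ vertices is pancyclic, each strong component of $T - F$ has at most $k-1$ vertices. Ordering the components in topological order (with any internal ordering of vertices) yields a linear order $\pi$ of $V(T)$ under which every backward arc of $T$ either lies inside some component, contributing at most $\sum_i \binom{|V_i|}{2} \le (k-2)n/2 = O(n)$ arcs in total, or is a cross-component arc from a later to an earlier component, in which case it must lie in $F$. Hence $|F| \ge B(\pi) - O(n)$, where $B(\pi)$ counts backward arcs of $T$ under $\pi$. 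I additionally arrange that $T$ is quasi-random, for instance by requiring $|\widehat{1_S}(\chi)| = o(n)$ for every nontrivial character $\chi$ of $\mathbb{Z}_n$. The skew-adjacency matrix of $T$ is then a circulant with operator norm $o(n)$, and so $B(\pi) \ge n(n-1)/4 - o(n^2)$ for every linear order $\pi$ (equivalently, the maximum acyclic subgraph of $T$ has size $(1/2+o(1))\binom{n}{2}$, a standard consequence of tournament quasi-randomness). Therefore $\tau_k(T) \ge n^2/4 - o(n^2)$, matching the trivial upper bound coming from the same quasi-randomness.

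The main obstacle is producing $S$ that is simultaneously algebraically structured (admits a zero-sum $k$-partition with distinct partial sums in each block) and analytically random-like (Fourier smooth). A probabilistic construction is natural: greedily choose the blocks $S_i$ one at a time, each uniformly from all valid zero-sum $k$-subsets of $\mathbb{Z}_n \setminus \bigl(\{0\} \cup \bigcup_{j<i}(S_j \cup (-S_j))\bigr)$ having distinct partial sums. One must verify that the process completes (the pool of admissible blocks remains of size $\Theta(n^{k-1})$ throughout, for $n$ large relative to $k$) and that a second-moment computation delivers $|\widehat{1_S}(\chi)| = o(n)$ for every nontrivial character with positive probability. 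A deterministic alternative would start from a Paley-like set (quasi-random by Gauss-sum estimates) and apply controlled local swaps to impose the zero-sum partition while perturbing the Fourier profile only negligibly. All other ingredients---Moon's theorem, LP duality, and the standard link between Fourier smallness and quasi-randomness for Cayley tournaments---are routine.
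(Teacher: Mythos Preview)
Your construction of $T$ as a Cayley tournament with a zero-sum block partition of the connection set is a genuinely different route from the paper's. The paper instead invokes Wilson's theorem to decompose $K_n$ into undirected $k$-cycles and orients each cycle uniformly at random; the resulting tournament is regular and automatically carries the directed $k$-cycle decomposition, and the pseudo-randomness needed later (all bipartite densities close to $1/2$) follows from a direct Chernoff bound over the independent orientation coins. Your Cayley route trades the external Wilson input for an internal existence problem (the ``main obstacle'' you correctly flag) together with a Fourier analysis. Both would yield the packing equality $\nu_k = \nu_k^* = n(n-1)/(2k)$; the paper's construction is shorter because the pseudo-randomness comes for free.

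There is, however, a genuine gap in your argument for $\tau_k(T) \ge n^2/4 - o(n^2)$. You invoke Moon's theorem to conclude that every strong component of $T - F$ has at most $k-1$ vertices, but Moon's theorem applies to strongly connected \emph{tournaments}, whereas the strong components of $T - F$ are only strongly connected directed graphs (since arcs have been deleted). Such a component can be arbitrarily large and still $\overrightarrow{C_k}$-free: the directed $(k{+}1)$-cycle is strongly connected on $k{+}1$ vertices and contains no directed $k$-cycle. Without the bound $|V_i| \le k-1$, the estimate $\sum_i \binom{|V_i|}{2} = O(n)$ fails, and your link between $|F|$ and the minimum number of backward arcs under a linear order breaks down. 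The quasi-randomness of $T$ controls the feedback arc set (acyclic subgraph) problem, but being $\overrightarrow{C_k}$-free is strictly weaker than being acyclic, and your bridge between the two does not hold.

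The paper closes this gap differently: it applies the directed regularity lemma to $T - F$, uses the density concentration of $T$ to cap every arc weight in the reduced digraph $R$ at $1/2 + o(1)$, deletes low-weight arcs to obtain $R'$, and then applies the H\"aggkvist--Thomassen extremal bound (a $\overrightarrow{C_k}$-free digraph on $\ell$ vertices has at most $\ell(\ell-1)/2 + (k-2)\ell/2$ arcs) to $R'$. This yields $|E(T - F)| \le (1/4 + o(1))n^2$ and hence $|F| \ge n^2/4 - o(n^2)$, with no structural claim about strong components of $T - F$.
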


Generalizing Theorem \ref{t:main} to arbitrary $H$ requires introducing a graph parameter.
For a directed graph $L$, {\em the blowup} of $L$, denoted by $B(L)$, is obtained by replacing each vertex $v \in V(L)$ with a countably infinite independent set $I_v$, and having all possible arcs from $I_a$ to $I_b$ whenever $(a,b) \in E(L)$. Let $\D_H(L)$ denote the minimum number of arcs that should be
added to $B(L)$ so that a copy of $H$ is obtained. Let
\begin{align}
	f(H,L) & = \max \left\{ |E(H)|\left(1-\frac{|E(L)|}{|V(L)|^2}\right)\,,\,|E(H)|-\D_H(L)  \right\}	\label{e:fhl}\\
	f(H) & = \inf_{L} f(H,L)	\label{e:fh}
\end{align}
where the infimum is taken over all nonempty directed graphs $L$.
Notice that $f(H)$ is a certain measure of how much $H$ embeds in a blowup of any possible directed graph. Our main result follows.
\begin{theorem}\label{t:main-2}
	If $D$ is an arc-weighted directed multigraph, then $\tau_H(D) \le f(H)\nu_H^*(D)$.
\end{theorem}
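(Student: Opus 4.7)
The plan is to use LP duality together with a random blowup-style partition of $V(D)$ indexed by an auxiliary directed graph $L$ that nearly achieves the infimum defining $f(H)$. By LP duality, $\nu_H^*(D)=\tau_H^*(D)$, so it suffices to round an optimal fractional cover $z:E(D)\to[0,\infty)$ with $\sum_{e}w_e z_e=\nu_H^*(D)$ into an integer arc-cover of weight at most $(f(H)+\varepsilon)\nu_H^*(D)$, for arbitrary $\varepsilon>0$. Fix such an $\varepsilon$ and pick $L$ with $f(H,L)\le f(H)+\varepsilon$; write $r=|V(L)|$, $m=|E(L)|$, $p=m/r^2$, and $\alpha=|E(H)|-\D_H(L)$.

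I would then consider a uniformly random labeling $\phi:V(D)\to V(L)$, declaring an arc $e=(u,v)$ \emph{good} if $(\phi(u),\phi(v))\in E(L)$ and \emph{bad} otherwise, so that good occurs with probability $p$. The key combinatorial input coming from the definition of $\D_H(L)$ is that, under $\phi$, every $H$-copy $H'$ in $D$ has at most $\alpha$ good arcs, and hence at least $\D_H(L)$ bad arcs. The integer cover $C$ would then be defined as the union of bad arcs with $z_e\ge t_b$ and good arcs with $z_e\ge t_g$ for suitable thresholds $t_b,t_g$, tuned so that (i) no $H$-copy can have all its bad arcs below $t_b$ and all its good arcs below $t_g$ without contradicting the cover inequality $\sum_{e\in H'}z_e\ge 1$ (here the bound $g\le\alpha$ is crucial), and (ii) a Markov-type estimate $E[w(C)]\le (1-p)\nu_H^*(D)/t_b+p\,\nu_H^*(D)/t_g$ can be pushed down to $f(H,L)\nu_H^*(D)$. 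Averaging over $\phi$ then yields a specific labeling whose cover has weight at most $f(H,L)\nu_H^*(D)\le (f(H)+\varepsilon)\nu_H^*(D)$, and letting $\varepsilon\to 0$ finishes the argument. The two terms in $f(H,L)$ correspond to two regimes: when $|E(H)|(1-p)$ dominates (sparse $L$), cost is paid mainly by the bad-arc part of $C$, whereas when $\alpha$ dominates (dense $L$) it is paid by the good-arc part.

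The main obstacle will be arranging the threshold choice so that the expected weight of $C$ is controlled by the \emph{maximum} of $|E(H)|(1-p)$ and $\alpha$, rather than by their sum or by a Cauchy--Schwarz-type expression like $(\sqrt{(1-p)\D_H(L)}+\sqrt{p\alpha})^2$ that the naive symmetric Lagrangian analysis produces. For instance, the symmetric choice $t_b=1/|E(H)|$, $t_g=1/\alpha$ does not yield a valid cover: an $H$-copy with a nontrivial mix of bad and good arcs can satisfy $\sum_{e\in H'}z_e\ge 1$ while every bad arc lies below $1/|E(H)|$ and every good arc below $1/\alpha$. Overcoming this will almost certainly require a case split based on which of the two terms dominates $f(H,L)$---equivalently, whether $p\le \D_H(L)/|E(H)|$ or $p>\D_H(L)/|E(H)|$---or a more refined two-stage rounding scheme in which bad-dominated and good-dominated $H$-copies are treated separately, so that each copy pays for only one of the two terms. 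Getting the tight $f(H,L)$ factor out of what is essentially a Markov/Chernoff estimate is the central technical challenge.
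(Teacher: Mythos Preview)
Your high-level framework---LP duality, a random labeling $\phi:V(D)\to V(L)$, and the observation that every $H$-copy has at least $\D_H(L)$ bad arcs---matches the paper exactly. But the two-threshold scheme you propose is the wrong mechanism, and the obstacle you flag at the end is a symptom of that: trying to balance thresholds $t_b,t_g$ via a Markov-type bound on $\sum_e w_e z_e$ inevitably produces a sum or a geometric-mean-type expression rather than the $\max$ that defines $f(H,L)$.

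The paper dissolves this obstacle with two ideas you are missing. First, a preliminary \emph{peeling step}: by an easy induction on $|E(D)|$ (Lemma~\ref{l:upper}), one may assume the optimal fractional cover $c$ satisfies $c(e)<f(H,L)^{-1}$ for every arc; if some $c(e)\ge f(H,L)^{-1}$, deleting $e$ drops $\tau_H^*$ by at least $f(H,L)^{-1}w(e)$, so adding $e$ back to a cover of $D\setminus e$ preserves the ratio. Second, after peeling, no thresholds are needed at all: the cover $F$ is simply the set of \emph{bad} arcs with $c(e)>0$. This is valid because any $H$-copy has at most $|E(H)|-\D_H(L)$ good arcs, each with $c(e)<f(H,L)^{-1}\le 1/(|E(H)|-\D_H(L))$, so if all its bad arcs had $c(e)=0$ the total would be below $1$. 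For the weight, the paper uses \emph{complementary slackness} with an optimal fractional packing $m$ rather than Markov: whenever $c(e)>0$ one has $w(e)=\sum_{X\ni e}m(X)$, whence $\sum_{c(e)>0}w(e)\le |E(H)|\,\nu_H^*(D)$, and the expected weight of $F$ is at most $|E(H)|(1-p)\,\nu_H^*(D)\le f(H,L)\,\nu_H^*(D)$. Thus the two terms in $f(H,L)$ never compete: the term $|E(H)|-\D_H(L)$ enters only through the peeling threshold (guaranteeing $F$ is a cover), while $|E(H)|(1-p)$ alone bounds the post-peeling cost.
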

It is possible to provide good upper bounds, and sometimes determine $f(H)$ for some particular $H$ or certain families of directed graphs.
In fact, in many cases (but {\em not} all cases) the infimum in \eqref{e:fh} is a minimum, so that
$f(H)=f(H,L)$ is attained by some $L$. As we show in Section \ref{sec:fH}, 
$f(\overrightarrow{C_k})=2k/3$ except when $k=2$ in which case $f(\overrightarrow{C_2})=1$
or $k=5$ in which case $f(\overrightarrow{C_5})=25/8$. Thus, Theorem \ref{t:main} is a corollary of
Theorem \ref{t:main-2}. As another example, $f(H)\le \lfloor k^2/4\rfloor$ for all
$k$-vertex tournaments. As we show, this implies the known undirected results
$\tau_{K_k}(G) \le \lfloor k^2/4\rfloor\nu^*_{K_k}(G)$ \cite{krivelevich-1995,yuster-2012} also for the weighted multigraph setting. In all of these cases, the values are attained by some $L$.
The following proposition shows that almost all oriented graphs have $f(H)$ no larger than about
half of the size of their arc set.
\begin{proposition}\label{prop:1}
	Let $G$ be an undirected graph with $n$ vertices and $\Omega(n\ln n)$ edges.
	Let $H$ be a randomly chosen orientation of $G$.
	Then, asymptotically almost surely, $f(H) = (1+o_n(1))|E(H)|/2$. In particular, $\tau_H(D) \le (1+o_n(1))|E(H)|\nu_H^*(D)/2$ asymptotically almost surely.
\end{proposition}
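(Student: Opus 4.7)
The plan is to establish matching bounds $|E(H)|/2 \le f(H) \le (1+o_n(1))|E(H)|/2$; the lower bound will hold deterministically for every directed graph $H$, while the upper bound uses the randomness of the orientation. Write $m := |E(H)| = |E(G)|$.

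For the lower bound, fix any nonempty loopless directed graph $L$ on $r$ vertices and put $\delta = |E(L)|/r^2$. A uniformly random $\phi: V(H) \to V(L)$ sends each arc of $H$ into $E(L)$ with probability exactly $\delta$, so by averaging some $\phi$ embeds at least $\delta m$ arcs of $H$ in $B(L)$, giving $|E(H)| - \D_H(L) \ge \delta m$. Combined with the first term in \eqref{e:fhl} this yields $f(H,L) \ge \max\{m(1-\delta),\, \delta m\} \ge m/2$, and hence $f(H) \ge m/2$ for every directed graph $H$.

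For the upper bound, take $L = T_r$ to be any fixed tournament on $r = r(n)$ vertices where $r \to \infty$ with $n$ but $r = n^{o(1)}$ (say $r = \lceil \log \log n \rceil$). Since $T_r$ is a tournament, $\delta = (r-1)/(2r)$, so the first term of \eqref{e:fhl} equals $m(1/2 + 1/(2r)) = (1+o_n(1))m/2$. It remains to show that asymptotically almost surely $\max_\phi Y(\phi, H) \le (1+o_n(1))m/2$, where $Y(\phi, H) = |\{(u,v) \in E(H) : (\phi(u), \phi(v)) \in E(T_r)\}|$. For fixed $\phi$ and each edge $\{u,w\} \in E(G)$ with $\phi(u) \ne \phi(w)$, exactly one orientation of $\{u,w\}$ maps the resulting arc into $E(T_r)$, and the random orientation does so with probability $1/2$, independently across edges of $G$. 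Hence $Y(\phi, H)$ is stochastically dominated by $\mathrm{Bin}(m, 1/2)$, so Chernoff's inequality yields
\[
\Pr\bigl[Y(\phi, H) \ge m/2 + \epsilon m\bigr] \le \exp(-2\epsilon^2 m).
\]
A union bound over all $r^n$ maps $\phi : V(G) \to V(T_r)$ produces failure probability at most $\exp(n \ln r - 2\epsilon^2 m)$. Since $m = \Omega(n \ln n)$ and $\ln r = o(\ln n)$, one can pick $\epsilon = \epsilon(n) \to 0$ with $\epsilon^2 m / (n \ln r) \to \infty$, so a.a.s.\ $\max_\phi Y(\phi, H) \le m/2 + o_n(m)$. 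Combined with the first term this gives $f(H) \le f(H, T_r) \le (1+o_n(1))m/2$ a.a.s., matching the lower bound. The ``in particular'' assertion is then immediate from Theorem~\ref{t:main-2}.

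The main technical obstacle is choosing $L$ to balance the two terms in $f(H,L)$: a sparse $L$ inflates $m(1-\delta)$, while a dense $L$ lets $\max_\phi Y(\phi, H)$ exceed $m/2$, and a tournament of slowly growing size is the right compromise. The growth rate of $r$ is constrained by the union bound over $r^n$ homomorphisms, which is precisely where the density hypothesis $|E(G)| = \Omega(n \ln n)$ is forced: Chernoff concentration for a single $\phi$ must dominate the entropy $n \ln r$ of all $\phi$ while keeping the deviation window $\epsilon m$ sublinear in $m$.
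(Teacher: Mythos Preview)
Your proof is correct. The paper's own argument is much terser: it invokes Lemma~\ref{l:fas} for the two-sided bound $|E(H)|/2 \le f(H) \le \gamma(H)$ and then cites as a ``well-known simple exercise'' that a random orientation of a graph with $\Omega(n\ln n)$ edges has $\gamma(H) = (1+o_n(1))|E(H)|/2$ asymptotically almost surely. Your argument follows the same overall strategy---a deterministic lower bound from the definition of $f$, and a probabilistic upper bound via Chernoff plus a union bound---but is self-contained and takes a slightly different technical route for the upper bound. Instead of passing through $\gamma(H)$ (which in the paper's framework corresponds to $L = T_r$ with $r \ge |V(H)|$, and hence a union over all $n!$ linear orders), you choose $L$ to be a tournament of slowly growing size $r = \lceil\log\log n\rceil$, so that the union is only over $r^n = e^{o(n\ln n)}$ maps. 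This makes it completely transparent that the hypothesis $|E(G)| = \Omega(n\ln n)$ is exactly what is needed for the concentration $\exp(-2\epsilon^2 m)$ to beat the entropy $n\ln r$ while still allowing $\epsilon \to 0$; a naive union over $n!$ orders would not obviously achieve $\epsilon = o(1)$ at the threshold $m = \Theta(n\ln n)$. The paper's route is shorter because it packages the upper bound into a citation, whereas your direct argument is more careful with the stated edge-count hypothesis at the cost of not isolating the intermediate statement about $\gamma(H)$.
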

Finally, Corollaries \ref{coro:approx} and \ref{coro:dense} are, in fact, special cases of the following more general corollaries of Theorem \ref{t:main-2}.
\begin{corollary}\label{coro:approx-2}
	The problem of determining $\tau_H(D)$ admits a deterministic polynomial time $f(H)$-approximation algorithm. For any nonempty directed graph $L$, the $H$-cover problem
	(also in the weighted multigraph setting) admits a deterministic polynomial time $f(H,L)$-approximation algorithm. In particular, if $f(H)=f(H,L)$ for some $L$, then the $H$-cover problem
	admits a deterministic polynomial time $f(H)$-approximation algorithm.
\end{corollary}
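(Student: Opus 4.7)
My plan is to combine Theorem \ref{t:main-2} with standard LP machinery. Formulate the $H$-cover problem on the arc-weighted multigraph $D$ as the integer program with a variable $x_e \in \{0,1\}$ for each arc $e$ of $D$, a coverage constraint $\sum_{e \in E(H')} x_e \ge 1$ for every copy $H'$ of $H$ in $D$, and objective $\min \sum_e w(e) x_e$. Its LP relaxation has optimal value $\tau^*_H(D) = \nu^*_H(D)$ by LP duality. Because $H$ is fixed, this LP has $O(n^2)$ variables and $O(n^{|V(H)|})$ constraints, both polynomial in $n = |V(D)|$, so an optimal fractional cover can be computed in polynomial time.

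Next, I would verify that the proof of Theorem \ref{t:main-2} is algorithmic in the following sense: on input an optimal fractional cover of weight $\nu^*_H(D)$ together with a nonempty directed graph $L$, it constructs in polynomial time an integral $H$-cover $C$ of $D$ of weight at most $f(H,L)\cdot\nu^*_H(D)$. Since $\nu^*_H(D) \le \tau_H(D)$, the returned $C$ satisfies $w(C) \le f(H,L)\cdot\tau_H(D)$, which establishes the $f(H,L)$-approximation claim for every fixed $L$. The ``in particular'' clause then follows at once by feeding in any $L$ attaining $f(H) = f(H,L)$; the first sentence about an $f(H)$-approximation for $\tau_H(D)$ itself is the same statement in the attained case, and otherwise follows by choosing $L$ with $f(H,L)$ arbitrarily close to the infimum in \eqref{e:fh}.

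The main obstacle is checking that the construction behind Theorem \ref{t:main-2} can actually be executed in polynomial time. Concretely, I would need to verify that (i) any partition of $V(D)$ induced by an embedding of $D$ into the blowup $B(L)$ can be found in polynomial time; (ii) the final selection of arcs placed into $C$ is performed by a deterministic polynomial-time rule; and (iii) any probabilistic existence step in the proof admits a polynomial-time derandomization. For (iii) I expect the method of conditional expectations to suffice, since the relevant quantities (the number of $H$-copies passing through a given arc, the weight accrued by a tentative cover, and the $\D_H(L)$-based bookkeeping) are all computable in polynomial time whenever $H$ and $L$ have bounded size. Once these points are in hand, the corollary reduces to running the LP solver and then the rounding routine, both in polynomial time.
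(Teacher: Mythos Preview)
Your overall plan matches the paper's proof: solve the fractional cover LP, then make the rounding in Theorem~\ref{t:main-2} algorithmic via the method of conditional expectations over the random partition of $V(D)$ into $|V(L)|$ parts. Two remarks, one substantive and one cosmetic.

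The substantive point concerns the first sentence of the corollary. You write that when the infimum in \eqref{e:fh} is not attained, the $f(H)$-approximation ``follows by choosing $L$ with $f(H,L)$ arbitrarily close to the infimum.'' That does not work: for any such $L$ your rounding produces a cover of weight at most $f(H,L)\nu_H^*(D)$ with $f(H,L)>f(H)$, so you only get an $(f(H)+\varepsilon)$-approximation for each $\varepsilon>0$, not an $f(H)$-approximation. The paper handles this differently by distinguishing the two tasks in the statement. ``Determining $\tau_H(D)$'' is taken to mean approximating the \emph{value} $\tau_H(D)$, and for that one simply outputs $\nu_H^*(D)$: Theorem~\ref{t:main-2} gives $\nu_H^*(D)\le\tau_H(D)\le f(H)\nu_H^*(D)$ directly, so the LP optimum is itself an $f(H)$-approximation of the value, with no $L$ needed. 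The constructive cover (the ``$H$-cover problem'') is only claimed with ratio $f(H,L)$, and with ratio $f(H)$ only when the infimum is attained --- exactly as the corollary is phrased.

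The cosmetic point: your item (i) speaks of ``embedding $D$ into $B(L)$,'' but no such embedding is sought; one merely partitions $V(D)$ uniformly at random into $r=|V(L)|$ parts, after first (via Lemma~\ref{l:upper}) peeling off arcs with $c(e)\ge f(H,L)^{-1}$. For (iii), the conditional expectation being tracked is simply the expected weight of $F$, namely $\sum_{e:\,c(e)>0} w(e)\Pr[e\in F]$, which depends only on the partition and not on any count of $H$-copies; this makes the derandomization straightforward.
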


\begin{corollary}\label{coro:dense-2}
	If $D$ is an $n$-vertex directed graph, then $\tau_H(D) \le f(H)\nu_H(D)+o(n^2)$.
\end{corollary}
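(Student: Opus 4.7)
The plan is to derive Corollary \ref{coro:dense-2} from Theorem \ref{t:main-2} by comparing the fractional and integer $H$-packing numbers in the dense unweighted setting. Assigning weight $1$ to every arc of $D$ and applying Theorem \ref{t:main-2} gives $\tau_H(D) \le f(H)\nu_H^*(D)$, so it suffices to establish the directed analog of the Haxell--R\"odl estimate
$$\nu_H^*(D) \le \nu_H(D) + o(n^2)$$
for every fixed directed graph $H$; combining the two bounds then yields $\tau_H(D) \le f(H)\nu_H(D)+o(n^2)$.

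To prove this inequality I would follow the regularity-based template used in \cite{yuster-2012} for $K_k$-packings and invoked in \cite{CGKK-2022} for $\overrightarrow{C_3}$. Apply Szemer\'edi's regularity lemma in its directed form with a small parameter $\varepsilon>0$, obtaining an equitable partition $V_1,\dots,V_m$ whose ordered pairs are $\varepsilon$-regular up to an exceptional collection of $o(n^2)$ arcs. Delete from $D$ every arc lying inside a part, inside an irregular ordered pair, or inside an ordered pair of density below a fixed threshold $\eta>0$; in total this removes only $o(n^2)$ arcs. Since each removed arc contributes at most $1$ to the total weight of any fractional $H$-packing (by the packing constraint), the resulting cleaned graph $D'$ satisfies $\nu_H^*(D') \ge \nu_H^*(D)-o(n^2)$, while trivially $\nu_H(D)\ge \nu_H(D')$.

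The remaining task is to convert a fractional $H$-packing of $D'$ into an integer one of essentially the same value. The reduced graph supports an asymptotically optimal fractional $H$-packing, which I would spread uniformly over the $\varepsilon$-regular ordered tuples of density at least $\eta$; inside each such tuple the directed counting lemma guarantees many arc-disjoint copies of $H$, and an iterative nibble-style extraction (exactly as in \cite{HR-2001}) realizes the fractional packing integrally with an additive loss of $o(n^2)$. Chaining the inequalities yields $\nu_H(D)\ge \nu_H^*(D)-o(n^2)$, as required. The main obstacle is the counting/packing step for directed $H$ inside regular tuples, but since both the directed regularity lemma and its associated counting lemma are direct extensions of the undirected statements, the Haxell--R\"odl argument transfers without essential change.
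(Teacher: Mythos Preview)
Your approach is essentially the same as the paper's: combine Theorem~\ref{t:main-2} with the directed Haxell--R\"odl estimate $\nu_H^*(D)\le \nu_H(D)+o(n^2)$. The only difference is that you sketch a proof of this estimate from scratch, whereas the paper quotes it as Lemma~\ref{l:ny}, a known result of Nutov and Yuster~\cite{NY-2007} (itself the directed analogue of \cite{yuster-2005,HR-2001}); your sketch is accurate but unnecessary given the citation.
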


The rest of this paper is organized as follows. Some required definitions and
lemma are given in Section \ref{sec:prelim}. In Section \ref{sec:fH} we determine $f(H)$
for $k$-cycles and some other special directed graphs and prove Proposition \ref{prop:1}.
The proof of Theorem \ref{t:main-2} is given in Section \ref{sec:proof}.
Theorem \ref{t:lower} is proved in Section \ref{sec:lower}.

\section{Preliminaries}\label{sec:prelim}

We set notation used throughout the paper. For a directed
(multi)graph $D$, let $V(D)$ denote its vertex set and $E(D)$ denote its arc set.
Directed graphs are allowed to contain directed cycles of length $2$ and directed multigraphs
are also allowed to contain more than one arc in the same direction between two vertices.
An {\em orientation} of an undirected graph is a directed graph obtained by orienting each edge
in one of the possible directions. Equivalently, it is a directed graph with no directed cycles of length $2$.
A {\em tournament} is an orientation of the complete graph. A directed graph is {\em acyclic} if it has no directed cycles and it is $H$-free if it has no subgraph that is isomorphic to $H$.
Let $T_k$ denote the unique transitive (i.e. acyclic) tournament on $k$ vertices.

For a directed graph $H$ we denote by $C(H,D)$ the set of all subgraphs of $D$ isomorphic to $H$
(namely, the set of $H$-copies in $D$). If $F \subseteq E(D)$, then $D \setminus F$ is the spanning subgraph
of $D$ obtained by removing the arcs in $F$. If $F=\{e\}$ we use the shorthand $D \setminus e$.
We say that $D$ is {\em arc-weighted} if every arc $e$ is assigned a non-negative weight $w(e)$.

A {\em fractional $H$-packing} of an arc-weighted directed multigraph $D$ is a
function $m : C(H,D) \rightarrow [0,\infty)$ such that for every arc $e \in E(D)$, the sum of
$m(X)$ taken over all $H$-copies in $D$ that contain $e$ is at most $w(e)$. The {\em value} of $m$ is
the sum of $m(X)$ taken over all $H$-copies. The maximum
value of a fractional $H$-packing of $D$ is denoted by $\nu_H^*(D)$.
If $D$ is unweighted (equivalently, all arc weights are $1$) and $m(X) \in \{0,1\}$ for each
$X \in C(H,D)$ we say that $m$ is an {\em $H$-packing}.
The maximum value of an $H$-packing of an unweighted directed multigraph $D$ is denoted by $\nu_H(D)$. Equivalently, $\nu_H(D)$ is the maximum number of pairwise arc-disjoint $H$-copies in $D$.
Clearly, $\nu_H(D) \le \nu_H^*(D)$ for every unweighted directed multigraph $D$.

A {\em fractional $H$-cover} of an arc-weighted directed multigraph $D$ is a
function $c : E(D)\rightarrow [0,1]$ such that for each $X \in C(H,D)$, the sum of the values
of $c$ on the arcs of $X$ is at least $1$. The {\em value} of $c$ is the sum of
$w(e)c(e)$ taken over all arcs $e \in E(D)$. The minimum value of the fractional
$H$-cover of $D$ is denoted by $\tau_H^*(D)$.
If $c(e) \in \{0,1\}$ for each $e \in E(D)$ we say that $c$ is an {\em $H$-cover}.
The minimum value of an $H$-cover is denoted by $\tau_H(D)$. Equivalently, $\tau_H(D)$ is the minimum sum of weights of a set of arcs $F$ such that $D \setminus F$  is $H$-free.
Clearly, $\tau_H(D) \ge \tau_H^*(D)$ for every arc-weighted directed multigraph $D$.

Given an arc-weighted directed multigraph $D$, a minimum value fractional $H$-cover of $D$ and a maximal value fractional $H$-packing of $D$ can be computed in polynomial time by linear programming.
Moreover, by linear programming duality, $\nu_H^*(D)=\tau_H^*(D)$. In particular, $\tau_H(D) \ge \nu_H^*(D)$ and if $D$ is unweighted then $\tau_H(D) \ge \tau_H^*(D)=\nu_H^*(D) \ge \nu_H(D)$.

Suppose now that $D$ is an unweighted directed graph.
It is not difficult to provide examples where $\tau_H(D)$ is larger than $\tau^*_H(D)$ and
to provide examples where $\nu_H(D)$ is smaller than $\nu^*_H(D)$.
However, in a dense setting, the latter pair are always close.
The following result of Nutov and Yuster \cite{NY-2007} is a directed version of a result of the author
\cite{yuster-2005} which, in turn is a generalization of a result of Haxell and R\"odl \cite{HR-2001} on the difference between a fractional and integral packing in undirected graphs.
\begin{lemma}\label{l:ny}
	Let $H$ be a fixed directed graph. If $D$ is a directed graph with $n$ vertices, then
	$\nu^*_H(D) \le \nu_H(D)+o(n^2)$. Furthermore, there exists a polynomial time algorithm
	that produces an $H$-packing of $D$ of size at least $\nu^*_H(D)-o(n^2)$. \qed
\end{lemma}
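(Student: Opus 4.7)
The plan is to lift the Haxell--R\"odl argument into the directed setting, following the standard template: regularize, nibble, and derandomize. First I would apply a directed version of Szemer\'edi's regularity lemma (taking into account both orientations between each pair of parts) to obtain an equipartition $V_1,\dots,V_r$ of $V(D)$ so that, except for an $o(n^2)$ set of arcs, each ordered pair $(V_i,V_j)$ is $\varepsilon$-regular with some density $d_{ij}\ge \eta$. Call the associated weighted directed ``cluster graph'' $R$, where arc $(i,j)$ carries weight $d_{ij}|V_i||V_j|$. A key property (the directed counting/embedding lemma) is that any fixed directed graph $H$ on $k$ vertices mapped to a tuple of $k$ parts in $R$ whose pairwise densities are all positive has $(1\pm o(1)) \prod d_{ij}\cdot \prod |V_{\sigma(v)}|$ labelled copies in $D$.

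Next I would exploit this to upper-bound $\nu^*_H(D)$ in cluster-graph language: any fractional $H$-packing in $D$ induces, after averaging over parts, an ``almost-fractional'' packing of $H$-tuples in the blown-up cluster graph whose total capacity on the $(i,j)$-arc is essentially $d_{ij}|V_i||V_j|$. Thus $\nu^*_H(D)\le \sum_\sigma m_\sigma + o(n^2)$ where $m_\sigma$ is an LP-optimal weight placed on the tuple $\sigma$ of parts supporting a copy of $H$, subject to arc-capacity constraints in $R$. I would then realize these $m_\sigma$ integrally by the nibble: for each chosen tuple $\sigma$, pick uniformly random labelled $H$-copies inside $V_{\sigma(1)}\times\cdots\times V_{\sigma(k)}$, inserting them one at a time and discarding if an arc is already used. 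The regularity of each pair guarantees that at each step almost every candidate copy uses only unused arcs, so with high probability one obtains $(1-o(1))m_\sigma$ arc-disjoint copies of $H$ on $\sigma$, giving a total $H$-packing of size $\nu^*_H(D)-o(n^2)$.

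Finally, to make this \emph{polynomial time} and deterministic, I would replace the random nibble by the standard greedy-deterministic surrogate used in \cite{HR-2001}: at each step, pick an $H$-copy on the current ``available'' subgraph that maximizes (or at least meets a threshold for) the number of copies it leaves untouched, using the regularity-based count as a pessimistic estimator. The method of conditional expectations then produces a deterministic sequence of choices achieving essentially the same packing size; the LP computing the $m_\sigma$ on the bounded-size cluster graph is, of course, solvable in polynomial time.

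The main obstacle, as usual in this line of work, is the bookkeeping around the error terms: one must check that the quantifier order $\varepsilon\ll \eta\ll 1/r\ll 1$ is respected so that the three sources of loss (arcs outside regular pairs, densities below $\eta$, and nibble wastage inside each regular tuple) all remain $o(n^2)$ uniformly in $H$ and in the fractional packing. A secondary subtlety is that $H$ can have symmetries and can contain digons or short cycles, so the directed embedding lemma must be stated for labelled copies and the counting carefully translated to unlabelled $H$-copies, but this is a routine factor of $|\operatorname{Aut}(H)|$.
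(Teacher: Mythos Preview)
The paper does not prove this lemma at all: note the \qed\ appended to the statement. It is quoted as a known result of Nutov and Yuster \cite{NY-2007}, which is the directed analogue of \cite{yuster-2005} and ultimately of Haxell--R\"odl \cite{HR-2001}. So there is no ``paper's own proof'' to compare against; the lemma is used as a black box.

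Your sketch is broadly the right strategy for how that cited result is actually established (directed regularity, reduce to a fractional problem on the cluster graph, then nibble/greedy to realize the packing integrally, and derandomize), so in that sense you have correctly reconstructed the intended route. One small slip: your quantifier hierarchy $\varepsilon\ll \eta\ll 1/r\ll 1$ is inverted. In any application of the regularity lemma the number of parts satisfies $r\ge \varepsilon^{-1}$, so $1/r\le \varepsilon$; the correct dependence is rather $1/n \ll 1/r \ll \varepsilon \ll \eta$ (or one fixes $\eta$ first, then $\varepsilon$ small in terms of $\eta$ and $H$, and $r$ is then forced large). This matters because your error accounting for ``arcs inside parts'' and ``irregular pairs'' relies on $r$ being large relative to $1/\varepsilon$, not the other way around.
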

\noindent Corollary \ref{coro:dense-2} follows immediately from Lemma \ref{l:ny} and Theorem \ref{t:main-2}.

\section{$f(H)$ and $f(H,L)$}\label{sec:fH}

In this section we consider $f(H)$ and $f(H,L)$; we determine $f(H)$ for certain families of directed graphs
and certain small $H$ and provide some general upper bounds for it.
To avoid trivial cases, we assume that $H$ is a directed graph with at least two arcs and that $L$
is a nonempty directed graph with $r \coloneqq |V(L)|$ vertices.
\begin{proposition}\label{prop:bipartite}
	$f(H)=|E(H)|$ if and only if $H$ has no directed path of length $2$ and no directed cycle of length $2$.
\end{proposition}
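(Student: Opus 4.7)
The plan is to reduce the proposition to a purely combinatorial embeddability question. The first observation is that each of the two terms in the maximum defining $f(H,L)$ is bounded above by $|E(H)|$, so $f(H,L) < |E(H)|$ if and only if both terms are strictly smaller than $|E(H)|$. The first term $|E(H)|(1-|E(L)|/|V(L)|^2)$ is strictly less than $|E(H)|$ iff $|E(L)| \ge 1$, and the second term $|E(H)| - \D_H(L)$ is strictly less than $|E(H)|$ iff $\D_H(L) \ge 1$, which by definition means that $H$ is not a subgraph of $B(L)$. Hence $f(H) = |E(H)|$ is equivalent to the statement: for every directed graph $L$ with at least one arc, $H \subseteq B(L)$.

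Next I would reformulate the hypothesis on $H$. The combined conditions ``no directed path of length $2$'' and ``no directed $2$-cycle'' hold simultaneously if and only if no vertex of $H$ has both an in-arc and an out-arc: indeed, if $(u,v),(v,w) \in E(H)$ then $u \neq w$ produces a directed $2$-path and $u=w$ produces a directed $2$-cycle. Equivalently, $V(H)$ partitions as $S \cup T$, where every vertex of $S$ is a source and every vertex of $T$ is a sink, and every arc of $H$ goes from $S$ to $T$ (isolated vertices are placed in either part arbitrarily).

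For the forward direction, assume $H$ has this ``source-to-sink'' structure and fix any $L$ with an arc $(a,b)$; since $L$ is loopless, $a \neq b$. As the classes $I_a$ and $I_b$ are infinite, one can map $S$ injectively into $I_a$ and $T$ injectively into $I_b$, and each arc of $H$ then corresponds to an arc from $I_a$ to $I_b$ in $B(L)$, producing a copy of $H$. For the reverse direction, take the single-arc digraph $L = (\{x,y\},\{(x,y)\})$ as a witness: every arc of $B(L)$ goes from $I_x$ to $I_y$, so $B(L)$ contains no directed $2$-cycle and no directed path of length $2$ (because the vertices of $I_y$ have out-degree zero). Any $H$ containing either forbidden configuration thus fails to embed in $B(L)$, whence $\D_H(L) \ge 1$ and $|E(L)| = 1$, giving $f(H) \le f(H,L) < |E(H)|$.

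I do not anticipate a serious obstacle; the argument is essentially definitional unwrapping paired with a one-arc construction. The only point requiring some care is the logical equivalence between $f(H,L) < |E(H)|$ and the simultaneous strict positivity of $|E(L)|$ and $\D_H(L)$, since this equivalence is the hinge that converts the proposition into the clean embeddability characterization.
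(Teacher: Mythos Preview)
Your proof is correct and follows essentially the same approach as the paper: both arguments hinge on the observation that $f(H,L)=|E(H)|$ forces $\D_H(L)=0$ (since $L$ is nonempty), reducing the question to whether $H$ embeds in every blowup $B(L)$, and both use the single-arc digraph (the paper calls it $T_2$) as the witness when $H$ contains a directed $2$-path or $2$-cycle. Your write-up is slightly more explicit in unpacking the equivalence and the source/sink reformulation, but the content is the same.
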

\begin{proof}
	Suppose first that $H$ has no directed path of length $2$ and no directed cycle of length $2$.
	Then $H$ is an orientation of an undirected bipartite graph where all arcs are oriented from one part to the other part. So, $H$ is a subgraph of $B(L)$ and therefore $\D_H(L)=0$
	implying that $f(H,L)=|E(H)|$ and that $f(H)=|E(H)|$.
	If $H$ has a directed path of length $2$ or a directed cycle of length $2$, then consider $L=T_2$. As $B(T_2)$ has no path of length $2$ and no directed cycle of length $2$,
	we have that $\D_H(T_2)\ge 1$, and so $f(H) \le f(H,T_2) \le \max\{\frac{3}{4}|E(H)|,|E(H)|-1\}$.
\end{proof}
Proposition \ref{prop:bipartite} is in sync with Theorem \ref{t:main-2} in the sense that $f(H)$ in the statement of Theorem \ref{t:main-2} cannot be
replaced by a smaller constant which depends only on $H$ for any given directed graph $H$ with no directed path of length $2$ and no directed cycle of length $2$. Indeed, let $D$ be an orientation
of $K_{n,n}$ where all arcs go from one part to the other and where $n \ge |V(H)|$.
Recalling that the Tur\'an number of (undirected) bipartite graphs is $o(n^2)$, we have that
$\tau_H(D) =n^2(1-o_n(1))$ while $\nu^*_H(D)=n^2/|E(H)|$.

\vspace{0.3cm}
In some cases the infimum in the definition of $f(H)$ is not attained by any $L$.
Although there are infinitely many examples, the simplest is $H=\overrightarrow{C_2}$.
On the one hand, $f(\overrightarrow{C_2},L) > 1$ for any $L$. Indeed, if $L$ has a
directed cycle of length $2$ then $f(\overrightarrow{C_2},L) = 2$. Otherwise,
$\D_{\overrightarrow{C_2}}(L)=1$ and $L$ is a subgraph of some tournament on $r$ vertices
so $f(\overrightarrow{C_2},L) \ge 2(1-r(r-1)/2r^2)=1+1/r$. If $L$ is a tournament
then $f(\overrightarrow{C_2},L) =1+1/r$. Taking $r$ to infinity, we have that $f(\overrightarrow{C_2})=1$.

\vspace{0.3cm}
Let $\gamma(H)$ denote the maximum number of arcs in an acyclic subgraph of $H$.
Equivalently, a {\em minimum feedback arc set} is a set of $|E(H)|-\gamma(H)$ arcs of $H$ whose removal makes $H$ acyclic. It is not difficult to show that for every directed graph $H$,
$\gamma(H) \ge |E(H)|/2$ where equality holds if and only if each pair of vertices of $H$
either induce a directed cycle of length $2$ or an empty graph.
Let $b(H)$ be the maximum number of arcs in a bipartite subgraph of $H$.
Clearly $b(H) > |E(H)|/2$.
\begin{lemma}\label{l:fas}
	$|E(H)|/2 \le f(H)\le \min\{\gamma(H)\,,\, b(H)\}$\;.
\end{lemma}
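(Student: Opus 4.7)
The inequality has three pieces, which I would prove separately.

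For the lower bound $f(H)\ge |E(H)|/2$, the plan is to show, for every nonempty $L$ with $r$ vertices, the random-embedding estimate $\D_H(L)\le |E(H)|\bigl(1-|E(L)|/r^2\bigr)$. Sample $\pi\colon V(H)\to V(L)$ uniformly and independently; since each part $I_v$ of $B(L)$ is infinite, $\pi$ gives a valid placement of $V(H)$ in $B(L)$. For any arc $(u,v)\in E(H)$, the ordered pair $(\pi(u),\pi(v))$ is uniform on $V(L)\times V(L)$, and the probability that it is an arc of $B(L)$ is exactly $|E(L)|/r^2$ (diagonal pairs fail since $B(L)$ has no loops). By linearity of expectation some placement captures at least $|E(H)||E(L)|/r^2$ arcs of $H$, and the remaining arcs can be added one at a time, proving the estimate. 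Setting $q=|E(L)|/r^2$ in the definition of $f(H,L)$ yields $f(H,L)\ge \max\{|E(H)|(1-q),\,|E(H)|q\}\ge |E(H)|/2$, whence $f(H)\ge |E(H)|/2$.

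For the upper bound $f(H)\le\gamma(H)$, I would take $L=T_k$ with $k\ge |V(H)|$ and let $k\to\infty$. As a blowup of an acyclic digraph, $B(T_k)$ is acyclic, so any subgraph of $H$ that embeds in $B(T_k)$ is acyclic and has at most $\gamma(H)$ arcs, giving $\D_H(T_k)\ge |E(H)|-\gamma(H)$; conversely, mapping $V(H)$ into $T_k$ in a topological order of a maximum acyclic subgraph of $H$ attains the bound, so $\D_H(T_k)=|E(H)|-\gamma(H)$. The density term is $|E(H)|(k+1)/(2k)\to |E(H)|/2\le \gamma(H)$ and the second term equals $\gamma(H)$, so $f(H,T_k)\to \gamma(H)$ and hence $f(H)\le \gamma(H)$. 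For the upper bound $f(H)\le b(H)$, I would instead take $L=\overrightarrow{C_2}$, the two-vertex digraph with both arcs. Then $|E(L)|/|V(L)|^2=1/2$, so the density term is exactly $|E(H)|/2$. The blowup $B(\overrightarrow{C_2})$ is the bidirected complete bipartite graph on two infinite parts, so a placement of $H$ corresponds to a bipartition of $V(H)$ and captures precisely the crossing arcs; hence $\D_H(\overrightarrow{C_2})=|E(H)|-b(H)$, and $f(H,\overrightarrow{C_2})=\max\{|E(H)|/2,\,b(H)\}=b(H)$ using $b(H)\ge |E(H)|/2$.

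The only real subtlety is getting the capture probability in the lower-bound random argument to come out cleanly as $|E(L)|/r^2$: one must include the diagonal case $\pi(u)=\pi(v)$ (which automatically fails since $B(L)$ has no loops) rather than condition on $\pi(u)\ne\pi(v)$. Once that is in hand, the two upper-bound identities $\D_H(T_k)=|E(H)|-\gamma(H)$ and $\D_H(\overrightarrow{C_2})=|E(H)|-b(H)$ are essentially bookkeeping on which subgraphs of $H$ embed in the two explicit blowups.
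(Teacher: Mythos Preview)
Your proof is correct, and for the two upper bounds it is essentially the paper's argument: take $L=T_r$ with $r\to\infty$ to get $f(H)\le\gamma(H)$, and take $L=\overrightarrow{C_2}$ to get $f(H)\le b(H)$, in each case computing $\D_H(L)$ exactly.

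Your lower bound, however, proceeds differently from the paper. The paper splits into two cases: if $L$ contains a $\overrightarrow{C_2}$, then any bipartite subgraph of $H$ embeds in $B(L)$, so $\D_H(L)\le |E(H)|/2$ and the second term of the max already gives $|E(H)|/2$; if $L$ is an orientation, then $|E(L)|\le \binom{r}{2}$ and the first term of the max gives at least $|E(H)|/2$. Your probabilistic argument avoids this dichotomy by proving the uniform inequality $\D_H(L)\le |E(H)|(1-|E(L)|/r^2)$ for every $L$, from which $f(H,L)\ge |E(H)|\max\{1-q,q\}\ge |E(H)|/2$ with $q=|E(L)|/r^2$ follows in one line. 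This is a genuinely cleaner route: it shows that the two terms in the definition of $f(H,L)$ can never be simultaneously small, and it incidentally yields the stronger statement that the second term always dominates $|E(H)|q$. The paper's case analysis is slightly more elementary in that it uses only the deterministic bounds $b(H)\ge |E(H)|/2$ and $|E(L)|\le r(r-1)/2$, but your averaging argument is shorter and more conceptual.
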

\begin{proof}
	Let $L=T_r$ where $r \ge |V(H)|$. By the definition of $\gamma(H)$ we have that
	$\D_H(T_r)=|E(H)|-\gamma(H)$.
	We therefore have $f(H,T_r)=\max\{|E(H)|(1-r(r-1)/2r^2),\,\gamma(H)\}$.
	Taking $r$ to infinity we obtain $f(H) \le \gamma(H)$.
	
	Let $L=\overrightarrow{C_2}$. By the definition of $b(H)$ we have
	that $\D_H(\overrightarrow{C_2})=|E(H)|-b(H)$. We therefore have $f(H,\overrightarrow{C_2})=\max\{|E(H)|(1-2/4),\,b(H)\}=b(H)$
	whence $f(H) \le b(H)$.
	
	For the lower bound, consider any nonempty directed graph $L$. Consider first the case where $L$
	has a directed cycle of length $2$. Since every $H$ has a bipartite subgraph containing at least half
	of its arcs, and since any bipartite subgraph of $H$ is a subgraph of $B(L)$ (as $L$ has a directed cycle of length $2$) we have that $\D_H(L) \le |E(H)|/2$ so $f(H,L) \ge |E(H)|/2$.
	If $L$ has no directed cycle of length $2$ then $|E(L)| \le r(r-1)/2$ so
	$f(H,L) \ge |E(H)|(1-r(r-1)/2r^2) \ge |E(H)|/2$.
\end{proof}
\begin{proof}[Proof of Proposition \ref{prop:1}]
Suppose that $G$ is an undirected graph with $n$ vertices and $\Omega(n \ln n)$ edges.
Let $H$ be obtained by randomly and independently orienting each edge of $G$. It is well-known (and a simple exercise to prove) that $\gamma(H)=(1+o_n(1))|E(H)|/2$ asymptotically almost surely.
By Lemma \ref{l:fas} we obtain that asymptotically almost surely, $f(H) = (1+o_n(1))|E(H)|/2$.
\end{proof}

In some cases, as well as some classes of directed graphs, Lemma \ref{l:fas} is far from tight.
Consider the class of directed cycles.
Observe that $\gamma(\overrightarrow{C_k})=k-1$ and $b(\overrightarrow{C_k}) \ge k-1$ so Lemma \ref{l:fas} (while tight for $k=2$) gives a very poor upper bound for $f(\overrightarrow{C_k})$.
The following proposition determines $f(\overrightarrow{C_k})$.
\begin{proposition}\label{p:cycles}
	For all $k \ge 3$ we have $f(\overrightarrow{C_k}) = 2k/3$ unless $k=5$ where $f(\overrightarrow{C_5}) = \frac{25}{8}$.
\end{proposition}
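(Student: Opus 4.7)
The plan is to establish matching upper and lower bounds on $f(\overrightarrow{C_k})$. For the upper bound I exhibit explicit graphs $L$; for the lower bound I argue that no $L$ yields a strictly smaller value of $f(\overrightarrow{C_k},L)$.

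\emph{Upper bound.} I take $L=T_3$ when $k\ne 5$ and $L=T_4$ when $k=5$. In each case $|E(L)|/|V(L)|^2$ is chosen so that the first term in \eqref{e:fhl} equals the target value exactly ($k(1-1/3)=2k/3$ for $r=3$, and $5(1-3/8)=25/8$ for $r=4$). For the second term, $\D_{\overrightarrow{C_k}}(T_r)$ equals $k$ minus the maximum number of strictly ascending cyclic pairs in a length-$k$ sequence of labels from $\{1,\ldots,r\}$. Since cyclic label increments must sum to zero and each ascent is a positive integer of size at most $r-1$, one obtains an upper bound of $k(r-1)/r$ on the number of ascents; the periodic pattern $1,2,\ldots,r,1,2,\ldots$ attains $\lfloor k(r-1)/r\rfloor=k-\lceil k/r\rceil$ ascents, so $\D_{\overrightarrow{C_k}}(T_r)=\lceil k/r\rceil$. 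For $L=T_3$ the second term is $\lfloor 2k/3\rfloor\le 2k/3$, giving $f(\overrightarrow{C_k},T_3)=2k/3$. For $L=T_4$ and $k=5$ the second term is $5-2=3<25/8$, giving $f(\overrightarrow{C_5},T_4)=25/8$.

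\emph{Lower bound.} I must show $f(\overrightarrow{C_k},L)\ge 2k/3$ (respectively $\ge 25/8$ when $k=5$) for every nonempty directed graph $L$. By \eqref{e:fhl} it suffices to show that either the arc density of $L$ is $\le 1/3$ (resp.\ $\le 3/8$), or $\D_{\overrightarrow{C_k}}(L)\le k/3$ (resp.\ $\le 15/8$, hence $\le 1$ since $\D$ is an integer). If $L$ contains any directed cycle, then traversing the cycle and its partial subpaths gives directed walks of every positive length in $L$, in particular a walk of length $k-1$; cyclically closing this walk needs at most one new arc, so $\D\le 1$. Otherwise $L$ is acyclic; let $p$ be the length of a longest directed path. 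Assigning each vertex $v$ the level equal to the length of a longest directed path ending at $v$ exhibits $L$ as a sub-blow-up of $T_{p+1}$, hence $|E(L)|\le \tfrac{p}{2(p+1)}|V(L)|^2$. Therefore density $>1/3$ forces $p\ge 3$ and density $>3/8$ forces $p\ge 4$. A cyclic $k$-sequence partitioned at its non-arcs into $s$ runs of $L$-walks of length $\le p$ uses $k-s$ arcs with $k-s\le sp$, so $s\ge\lceil k/(p+1)\rceil$; conversely, concatenating $s$ walks of length $\le p$ in $L$ achieves equality. For $k\ne 5$ and $p\ge 3$ this gives $\D\le \lceil k/4\rceil\le k/3$, and for $k=5$ with $p\ge 4$ it gives $\D\le 1$. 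In every subcase the second term of \eqref{e:fhl} meets its target.

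\emph{Main obstacle.} The delicate point is verifying that the arithmetic inequality $\lceil k/4\rceil\le k/3$ holds for every integer $k\ge 3$ except $k=5$; this arithmetic exception is precisely what forces $k=5$ to admit the strictly smaller value $25/8$ attained by $T_4$. Underpinning the case analysis is the Tur\'an-type density bound $p/(2(p+1))$ for acyclic digraphs avoiding a directed path of length $p+1$, whose tightness (realised by the balanced blow-up of $T_{p+1}$) matches the extremal structure in both the case $k\ne 5$ (where $T_3$ is optimal) and the case $k=5$ (where $T_4$ is).
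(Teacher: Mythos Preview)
Your proof is correct and follows the same overall plan as the paper: exhibit $T_3$ (or $T_4$ when $k=5$) for the upper bound, and for the lower bound split on the structure of $L$ to force either the density term or the $\D$-term in \eqref{e:fhl} to meet the target, reducing to the arithmetic $\lceil k/4\rceil\le k/3$ for $k\ne 5$. The details differ in two places. First, when $L$ contains a $2$-cycle the paper asserts $\D_{\overrightarrow{C_k}}(L)=0$, which fails for odd $k$ (take $L=\overrightarrow{C_2}$, whose blowup is bipartite); your argument that any directed cycle in $L$ yields arbitrarily long directed walks in $B(L)$ and hence $\D\le 1$ is both correct and more uniform. Second, for the remaining case the paper works with orientations and argues that the absence of a directed path of length $3$ forces the underlying graph to be $K_4$-free (via R\'edei on $4$-vertex subtournaments), then invokes Tur\'an; you instead pass to the acyclic case and use the level decomposition to exhibit $L$ as $(p+1)$-partite, obtaining the density bound $p/(2(p+1))$ by convexity. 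Both routes give the same thresholds, but yours avoids Tur\'an's theorem and handles the $k=5$ lower bound explicitly, which the paper only sketches.
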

\begin{proof}
	Any directed path in $B(T_r)$ has length at most $r-1$. So, in order to obtain a directed $k$-cycle
	in $B(T_r)$ one must add at least $\lceil k/r \rceil$ arcs.
	Thus, $\D_{\overrightarrow{C_k}}(T_r)=\lceil k/r \rceil$ and  therefore
	$f(\overrightarrow{C_k},T_r) = \max\{k(\frac{1}{2}+\frac{1}{2r}),k-\lceil k/r \rceil\}$\;.
	Using $r=3$ we obtain that $f(\overrightarrow{C_k}) \le 2k/3$ and
	when $k=5$ we can use $r=4$ to obtain $f_{\overrightarrow{C_5}} \le \frac{25}{8}$.
	
	We prove that the upper bound $2k/3$ is tight for all even $k \ge 4$, $k \neq 5$. A similar argument shows tightness for the $25/8$ bound in the case $k=5$. So let $k \ge 4$, $k \neq 5$ and consider some nonempty directed graph $L$. If $L$ has a directed cycle of length $2$ then $\overrightarrow{C_k}$
	is a subgraph of $B(L)$ so we have $\D_{\overrightarrow{C_k}}(L)=0$
	and $f(\overrightarrow{C_k},L)=k$. So, we may assume that $L$ is an orientation.
	If $L$ has a directed path of length $3$ then $\D_{\overrightarrow{C_k}}(L) \le \lceil k/4 \rceil$ implying that $f(\overrightarrow{C_k},L) \ge k - \lceil k/4 \rceil \ge 2k/3$.
	Otherwise, the underlying graph of $L$ does not have a $K_4$, so $|E(L)| \le r^2/3$
	and therefore $f(\overrightarrow{C_k},L) \ge 2k/3$ as well.
\end{proof}

\section{Fractional packing and integral covering}\label{sec:proof}

Throughout this section, let $H$ be a given directed graph with at least two arcs.
We need the following simple lemma, analogous to Lemma 3 of \cite{CGKK-2022}.
\begin{lemma}\label{l:upper}
Let $D$ be an arc-weighted directed multigraph with weight function $w$, let $c:E(D)\rightarrow[0,1]$
be an optimal fractional $H$-cover of $D$, and let $\alpha > 0$. Suppose that there exists an arc $e$
such that $c(e) \ge \alpha > 0$. If $\tau_H(D \setminus e) \le \alpha^{-1}\nu_H^*(D \setminus e)$,
then $\tau_H(D) \le \alpha^{-1}\nu_H^*(D)$.
\end{lemma}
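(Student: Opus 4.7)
The plan is to produce an $H$-cover of $D$ of total weight at most $\alpha^{-1}\nu_H^*(D)$ by extending an $H$-cover of $D \setminus e$ guaranteed by the hypothesis. Specifically, let $F'$ be an $H$-cover of $D \setminus e$ with $w(F') = \tau_H(D \setminus e) \le \alpha^{-1}\nu_H^*(D \setminus e)$, and set $F = F' \cup \{e\}$. Since every $H$-copy of $D$ either contains $e$ (covered by $e \in F$) or lies entirely in $D \setminus e$ (covered by $F'$), $F$ is an $H$-cover of $D$, and $w(F) \le \alpha^{-1}\nu_H^*(D \setminus e) + w(e)$.

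The crux is therefore to show that removing $e$ drops the fractional packing number by at least $\alpha w(e)$, i.e.\ that
\[
\nu_H^*(D \setminus e) \le \nu_H^*(D) - \alpha w(e).
\]
For this I would pass to the dual via $\nu_H^*=\tau_H^*$ and use the given optimal fractional cover $c$ of $D$. The restriction of $c$ to $E(D) \setminus \{e\}$ is still a feasible fractional $H$-cover of $D \setminus e$, since any $H$-copy in $D \setminus e$ is an $H$-copy in $D$ and its cover constraint under $c$ does not involve $e$. Its value equals
\[
\sum_{e' \ne e} w(e')c(e') = \tau_H^*(D) - w(e)c(e) \le \nu_H^*(D) - \alpha w(e),
\]
using $c(e) \ge \alpha$ and LP duality. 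Hence $\nu_H^*(D\setminus e) = \tau_H^*(D\setminus e)$ is at most this quantity.

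Substituting into the bound on $w(F)$ gives
\[
w(F) \le \alpha^{-1}\bigl(\nu_H^*(D) - \alpha w(e)\bigr) + w(e) = \alpha^{-1}\nu_H^*(D),
\]
which yields $\tau_H(D) \le \alpha^{-1}\nu_H^*(D)$, as claimed. There is no real obstacle; the only subtlety worth double-checking is that restricting a fractional cover to a subgraph remains feasible, which is immediate from the fact that the copies of $H$ in $D \setminus e$ form a subset of the copies of $H$ in $D$.
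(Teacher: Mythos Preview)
Your proof is correct and follows essentially the same approach as the paper's: both restrict the optimal fractional cover $c$ to $D\setminus e$ to bound $\tau_H^*(D\setminus e)\le \tau_H^*(D)-\alpha w(e)$, take an optimal integral cover $F'$ of $D\setminus e$, and observe that $F'\cup\{e\}$ is an $H$-cover of $D$ of weight at most $\alpha^{-1}\nu_H^*(D)$. Your write-up is slightly more explicit in justifying why $F'\cup\{e\}$ covers every $H$-copy, but the argument is the same.
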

\begin{proof}
	Since $c$ restricted to $E(D) \setminus \{e\}$ is a fractional $H$-cover of
	$D \setminus e$, it follows that
	$$
	\tau_H^*(D \setminus e) \le \tau_H^*(D)-c(e)w(e) \le \tau_H^*(D)  - \alpha w(e)\;.
	$$
	In particular, $\alpha^{-1}\tau_H^*(D\setminus e) + w(e) \le \alpha^{-1}\tau_H^*(D)$.
	
	By the assumption of the lemma, there exists a set $F$ of arcs of weight at most
	$\alpha^{-1}\nu_H^*(D \setminus e)=\alpha^{-1}\tau_H^*(D \setminus e)$ such that $F$ is an $H$-cover of $D \setminus e$. Since the set $F \cup \{e\}$ is an $H$-cover of $D$ and its weight is at most $\alpha^{-1}\tau_H^*(D \setminus e) + w(e) \le \alpha^{-1}\tau_H^*(D) = \alpha^{-1}\nu_H^*(D)$, the lemma follows.
\end{proof}

Let $L$ be a given nonempty directed graph with $r \coloneqq|V(L)|$, $\ell \coloneqq|E(L)|$, and
assume that $V(L)=[r]$. Let
$$
\alpha=\frac{1}{|E(H)|-\D_H(L)}
$$
and observe that $0 < \alpha \le 1$ since $0 \le \D_H(L) < |E(H)|$ as $L$ is nonempty.
\begin{lemma}\label{l:cover}
Let $D$ be an arc-weighted directed multigraph and $c:E(D)\rightarrow[0,1]$
be a fractional $H$-cover of $D$ such that $c(e) < \alpha$ for every arc $e$.
Let $V_1,\ldots,V_r$ be a partition of $V(D)$  (some parts may be empty).
Let $F$ be the set of all arcs $e=(x,y)$ with $c(e) > 0$ and that further satisfy the following:
If $x \in V_i$ and $y \in V_j$ (possibly $i=j$) then $(i,j) \notin E(L)$.
Then $F$ is an $H$-cover of $D$.
\end{lemma}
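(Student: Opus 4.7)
The plan is to argue by contradiction: assume that $D \setminus F$ still contains a copy $H'$ of $H$, and derive that some arc of $H'$ should in fact have been placed in $F$. Since $V_1,\ldots,V_r$ partition $V(D)$, the embedding $H'$ induces a natural vertex map $\phi : V(H') \to [r]$ with $\phi(v) = i$ iff $v \in V_i$. The first step is to classify each arc $e = (x,y)$ of $H'$ as \emph{aligned} if $(\phi(x),\phi(y)) \in E(L)$ and \emph{misaligned} otherwise.

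The key observation is that $\phi$ may be viewed as a map of the vertex set of an abstract $H$ into $V(L)$, placing each vertex $v$ of $H'$ into the part $I_{\phi(v)}$ of the blowup $B(L)$. Under this viewpoint the aligned arcs already live inside $B(L)$, whereas the misaligned arcs are precisely those that one must add to $B(L)$ to realize $H$ in this position. By the definition of $\D_H(L)$, the number of misaligned arcs of $H'$ is therefore at least $\D_H(L)$, so the number of aligned arcs of $H'$ is at most $|E(H)| - \D_H(L)$.

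The next step is to combine the structure of $F$ with the hypothesis that $c(e) < \alpha$ for every arc $e$. A misaligned arc $e$ of $H'$ cannot lie in $F$ (since $H' \subseteq D \setminus F$), and by the very definition of $F$ this forces $c(e) = 0$. An aligned arc $e$ of $H'$ satisfies $c(e) < \alpha$ by hypothesis. Summing $c$ over $E(H')$ therefore yields
$$
\sum_{e \in E(H')} c(e) \;<\; \alpha\bigl(|E(H)| - \D_H(L)\bigr) \;=\; 1,
$$
using the calibration $\alpha = 1/(|E(H)| - \D_H(L))$. This contradicts the fact that $c$ is a fractional $H$-cover of $D$, which demands that the sum of $c$ over the arcs of any $H$-copy be at least $1$. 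Hence no such $H'$ exists, and $F$ is an $H$-cover of $D$.

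The only genuine step, small as it is, is the observation in the second paragraph: the vertex map $\phi$ is exactly the kind of map over which $\D_H(L)$ is minimized, so the definition of $\D_H(L)$ lower bounds the number of misaligned arcs of \emph{any} placement of $H$ into the parts. Once that is in hand, the calibration of $\alpha$ is designed precisely to make the weighted sum on $H'$ fall strictly below $1$.
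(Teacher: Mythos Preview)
Your proof is correct and follows essentially the same route as the paper's. The paper introduces the auxiliary set $F^*$ of all ``misaligned'' arcs (regardless of $c$-value) and observes that $E(D)\setminus F^*$ sits inside $B(L)$, while you work directly with an offending copy $H'$ and classify its arcs as aligned or misaligned via the induced map $\phi$; in both arguments the definition of $\D_H(L)$ forces at least $\D_H(L)$ misaligned arcs, after which the bound $c(e)<\alpha$ and the calibration $\alpha(|E(H)|-\D_H(L))=1$ yield the contradiction with $c$ being a fractional $H$-cover.
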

\begin{proof}
	Let $F^*$ be the set of all arcs $e=(x,y)$ that satisfy the following:
	If $x \in V_i$ and $y \in V_j$ (possibly $i=j$) then $(i,j) \notin E(L)$.
	Observe that $F \subseteq F^*$ and that $e \in F^* \setminus F$ has $c(e)=0$.
	By the definition of $F^*$, the set of arcs $E(D) \setminus F^*$ is a subgraph of $B(L)$.
	Let $X$ be some $H$-copy in $D$. Then $E(X) \setminus F^*$ is a subgraph of $B(L)$, so
	by the definition of $\D_H(L)$, we have that $|E(X) \cap F^*| \ge \D_H(L)$.
	Since $c(e) < \alpha$ for every arc $e$, it cannot be that $\D_H(L)$ arcs of $E(X) \cap F^*$ all have $c(e)=0$ as otherwise the total value of $c$ over all arcs of $X$ is less than $\alpha(|E(H)|-\D_H(L))=1$,
	contradicting the assumption that $c$ is a fractional $H$-cover of $D$.
	It therefore follows that $|E(X) \cap F| > 0$.
\end{proof}

\begin{proof}[Proof of Theorem \ref{t:main-2}]
Let $c$ be an optimal fractional $H$-cover of $D$ and let $m$ be an
optimal fractional $H$-packing. We will show that there exists an $H$-cover with
total value at most $f(H,L)\nu_H^*(D)$.
Using induction on the number of edges of $D$, observe that the theorem trivially holds when $D$ is empty.
By Lemma \ref{l:upper}, we can assume that
$c(e) < f(H,L)^{-1} \le \alpha$ for every arc $e \in E(D)$, as otherwise we can repeatedly apply
Lemma \ref{l:upper} and the induction hypothesis, removing edges of weight at least $f(H,L)^{-1}$ until none are left.

Randomly partition $V(D)$ into $r$ parts $V_1,\ldots,V_r$ where each vertex chooses its part uniformly
at random and independently of other vertices. Using the obtained random partition, we apply
Lemma \ref{l:cover} to obtain an $H$-cover $F$.

Next, we upper-bound the expected weight of $F$, i.e. the sum of the weights of its arcs.
First observe that by the definition of $F$, all arcs $e \in F$ have $c(e) > 0$.
Consider some arc $e=(x,y) \in E(D)$ with $c(e) > 0$. The probability that $e \notin F$
is precisely the probability that $x \in V_i$, $y \in V_j$ and $(i,j) \in E(L)$.
Equivalently, $\Pr[e \in F]=1-\ell/r^2$.
By complementary slackness, we have that if $c(e)>0$, then the
sum of $m(X)$ over all $H$-copies $X$ in $D$ for which $e \in E(X)$ equals $w(e)$.
The expected weight of $F$ is therefore
\begin{align*}
\left(1-\frac{\ell}{r^2}\right)\sum_{\substack{e \in E(D)\\ c(e) > 0}}w(e) & =
\left(1-\frac{\ell}{r^2}\right)\sum_{\substack{e \in E(D)\\ c(e) > 0}}\sum_{\substack{X \in H(D)\\ e \in E(X)}}m(X) \\
& \le |E(H)| \left(1-\frac{\ell}{r^2}\right)\sum_{X \in H(D)}m(X) \\
& \le f(H,L) \nu_H^*(D)\;.
\end{align*}
Thus, there exists a choice of $F$ such that $|F| \le  f(H,L) \nu_H^*(D)$ and in particular,
$\tau_H(D) \le f(H,L) \nu_H^*(D)$. Now, let $\varepsilon > 0$. By the definition of $f(H)$, there exists
a nonempty directed graph $L$ such that $f(H,L) \le f(H)+\varepsilon$, so we have that
$\tau_H(D) \le (f(H)+\varepsilon)\nu_H^*(D)$. As this holds for all $\varepsilon > 0$, we obtain that
$\tau_H(D) \le f(H)\nu_H^*(D)$, as required.
\end{proof}

\begin{proof}[Proof of Corollary \ref{coro:approx-2}]
	To obtain a deterministic polynomial time algorithm for approximating $\tau_H(D)$, we
	compute $\nu_H^*(D)$ using any polynomial time algorithm for linear programming.
	By Theorem \ref{t:main-2}, the approximation ratio is at most $f(H)$.
	
	For the second part of the corollary, first construct (using linear programming) an optimal fractional cover $c$, so its total value is $\tau_H^*(D)=\nu_H^*(D)$.
	Let $L$ be any fixed nonempty directed graph. We compute $\D_H(L)$ in constant time
	since in order to determine $\D_H(L)$ it suffices to consider only induced subgraphs of the blowup $B(L)$ with at most $|V(H)|$ vertices in each part. With $\D_H(L)$ given, we compute
	$f(H,L)$ in constant time.
	By Lemma \ref{l:upper}, we can eliminate from $D$ all arcs with $c(e) \ge f(H,L)^{-1}$ so we can
	now assume that all arcs have $c(e) < f(H,L)^{-1}$.
	By the proof of Theorem \ref{t:main-2}, the
	random set $F$ (which is constructed in linear time as $L$ is fixed), has expected
	weight at most $f(H,L)\nu_H^*(D)$, so we return $F$, which is an $H$-cover, as our algorithm's answer.
	This gives a randomized polynomial time $f(H,L)$-approximation algorithm for $H$-cover.
	To make our algorithm deterministic, we use the derandomization method of conditional expectation.
	Indeed, observe that the precise expected value
	$f(H,L)\nu^*_H(D)$ is known to us. Now, when we construct $F$, we consider the vertices $v \in V(D)$ one by one. In order to decide in which part $V_i$ to place $v$, we simply compute the conditional expectation
	of the expected value of $|F|$ for each of the possible $r$ choices. As one of these choices must yield a value at most $f(H,L)\nu^*_H(D)$ for the conditional expectation, we take that choice.
\end{proof}

\begin{corollary}\label{coro:graph}
	Let $G$ be an edge-weighted undirected multigraph. Then, $\tau_{K_k}(G) \le \lfloor k^2/4\rfloor\nu^*_{K_k}(G)$.
\end{corollary}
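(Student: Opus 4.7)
My plan is to reduce the undirected $K_k$-problem on $G$ to the directed $T_k$-problem on a suitable orientation $D$, and then invoke Theorem~\ref{t:main-2} together with the bound $f(T_k)\le \lfloor k^2/4\rfloor$, which is obtained from Lemma~\ref{l:fas}.

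First I would fix an arbitrary linear ordering of $V(G)$ and let $D$ be the directed multigraph on the same vertex set obtained by orienting every edge of $G$ (preserving weight and multiplicity) from its lower-indexed endpoint to its higher-indexed one. Then $D$ has no directed $2$-cycle, and its arcs are in weight-preserving bijection with the edges of $G$. Any $K_k$-copy in $G$ consists of $\binom{k}{2}$ edges (one between each pair of vertices in its $k$-element support); under this orientation these edges become the arcs of a transitive tournament on the same vertex set, i.e.\ a $T_k$-copy in $D$. Conversely, every $T_k$-copy in $D$ arises in this way, because the unique orientation of $K_k$ consistent with the fixed ordering is $T_k$. This yields a bijection between $K_k$-copies in $G$ and $T_k$-copies in $D$ that identifies their respective edge/arc sets. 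Since a fractional $K_k$-packing (resp.\ cover) of $G$ is defined by constraints at each edge $e$ of weight $w(e)$ that translate verbatim to the constraints at the corresponding arc in $D$, the bijection gives $\nu^*_{K_k}(G)=\nu^*_{T_k}(D)$ and $\tau_{K_k}(G)=\tau_{T_k}(D)$.

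Next I would bound $f(T_k)$ using Lemma~\ref{l:fas}: $f(T_k)\le b(T_k)$. The underlying undirected graph of $T_k$ is $K_k$, so any bipartite sub(di)graph of $T_k$ contains at most $\max_{A\cup B=[k]}|A|\,|B|=\lfloor k^2/4\rfloor$ arcs (with equality realized by the bipartition into the first $\lceil k/2\rceil$ and last $\lfloor k/2\rfloor$ vertices under the transitive order). Hence $f(T_k)\le \lfloor k^2/4\rfloor$. Applying Theorem~\ref{t:main-2} to $D$ with $H=T_k$ gives $\tau_{T_k}(D)\le \lfloor k^2/4\rfloor\,\nu^*_{T_k}(D)$, which translates via the bijection of the previous paragraph to the desired bound $\tau_{K_k}(G)\le \lfloor k^2/4\rfloor\,\nu^*_{K_k}(G)$.

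The only point requiring care is the bijection step in the weighted multigraph setting: parallel edges of $G$ between two vertices $u<v$ become parallel arcs of $D$ from $u$ to $v$, each retaining its weight, so weights and edge/arc incidence transfer identically and no packing/cover slack is introduced or lost. Beyond that bookkeeping, the argument is a direct application of the already-established Theorem~\ref{t:main-2} and Lemma~\ref{l:fas}, so I do not anticipate any genuine obstacle.
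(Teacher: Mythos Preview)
Your proposal is correct and follows essentially the same route as the paper: take an acyclic orientation of $G$ (the paper states this abstractly; you realize it concretely via a linear ordering), observe that $K_k$-copies in $G$ correspond bijectively to $T_k$-copies in $D$ so that all packing/cover parameters transfer, bound $f(T_k)\le b(T_k)=\lfloor k^2/4\rfloor$ via Lemma~\ref{l:fas}, and apply Theorem~\ref{t:main-2}. Your extra care with parallel edges and weights is fine bookkeeping but does not diverge from the paper's argument.
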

\begin{proof}
	Let $H$ be a tournament on $k$ vertices. Clearly, $b(H)=\lfloor k^2/4 \rfloor$ so by Lemma
	\ref{l:fas} we have that $f(H) \le \lfloor k^2/4 \rfloor$. Now, suppose that $G$ is an undirected
	edge-weighted multigraph and let $D$ be an acyclic orientation of $G$. Then any copy of $K_k$ in $G$ is a copy of $T_k$ in $D$ and thus $\rho_{K_k}(G)=\rho_{T_k}(D)$ for any $\rho \in \{\tau,\nu,\tau^*,\nu^*\}$.
	In particular, we obtain from Theorem \ref{t:main-2} that $\tau_{K_k}(G) \le \lfloor k^2/4 \rfloor\nu_{K_k}^*(G)$. Furthermore, Corollary \ref{coro:approx-2} shows that there is a
	polynomial time $\lfloor k^2/4 \rfloor$-approximation algorithm for $K_k$-cover.
\end{proof}

\section{Lower bound construction for directed cycles}\label{sec:lower}
Before presenting the construction which proves Theorem \ref{t:lower}, we need the following result of
H\"aggkvist and Thomassen \cite{HT-1976}. For completeness, we present a simplified proof of it.
We mention that the case $k=3$ of the following lemma was first proved Brown and Harary \cite{BH-1969}.
\begin{lemma}\label{l:HT}
Let $k \ge 2$ and let $D$ be a directed graph with $n$ vertices. If $D$ has no directed $k$-cycle, 
then $D$ has at most $n(n-1)/2+(k-2)n/2$ arcs.
\end{lemma}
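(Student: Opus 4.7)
I would proceed by induction on $n$. The base case $n\le k-1$ is immediate: no directed $k$-cycle fits on fewer than $k$ vertices, and the trivial bound $|E(D)|\le n(n-1)$ already satisfies the target inequality, which reduces to $n\le k-1$.

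For $n\ge k$, I split by strong connectivity. If $D$ is not strongly connected, decompose it into its strongly connected components $D_1,\ldots,D_\ell$ of orders $n_i<n$. Each $D_i$ inherits the no-$\overrightarrow{C_k}$ property, and between two distinct components every pair of vertices supports at most one arc (else a $2$-cycle would merge them into a single SCC). Applying the induction hypothesis to each $D_i$ and bounding the between-SCC contribution by $\binom{n}{2}-\sum_i\binom{n_i}{2}$, one verifies via a short telescoping calculation that the total arc count is at most $n(n-1)/2+(k-2)n/2$.

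If $D$ is strongly connected, the plan is to exhibit a vertex $v$ of sufficiently low total degree that deleting it and applying induction closes the bound. To locate $v$, I would examine a longest directed path $P=v_0\to\cdots\to v_p$ in $D$. Path maximality forces all out-neighbors of $v_p$ and all in-neighbors of $v_0$ onto $P$, yielding $d^+(v_p)\le p$ and $d^-(v_0)\le p$; the no-$k$-cycle hypothesis further rules out $v_{p-k+1}$ from $v_p$'s out-neighborhood whenever $p\ge k-1$, and the symmetric vertex from $v_0$'s in-neighborhood.

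The \textbf{main obstacle} is that these basic path-endpoint bounds alone are not tight enough to close the inductive arithmetic: even a vertex of total degree $n+k-3$ leaves a slack of $(k-2)/2$ above the target. Bridging this gap requires exploiting strong connectivity further, for instance by using a return arc $(v_p,v_i)$ (which must exist because $D$ is strongly connected, with $i$ constrained by the no-$k$-cycle hypothesis), or by combining the degree information at \emph{both} endpoints of $P$ to extract a sharper bound on one of them. Executing this refined bookkeeping cleanly is the heart of the ``simplified proof'' that the paper promises.
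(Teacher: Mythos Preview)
Your strong-connectivity reduction is fine and the arithmetic there does close, but the strongly connected case is where the real content lies, and you have acknowledged that you do not know how to finish it. The longest-directed-path idea with single-vertex deletion is not how the paper proceeds, and it is not clear that it can be made to work: the slack you noticed is real, and the vague suggestions (return arcs, combining both endpoints) do not obviously recover it.

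The paper's argument is structurally different in two ways. First, it never invokes strong connectivity. Instead it observes that if $|E(D)|$ exceeds the target bound $n(n-1)/2+(k-2)n/2$, then the \emph{digon graph} --- the undirected graph on $V(D)$ with an edge wherever both arcs are present --- has more than $(k-2)n/2$ edges (since $|E(D)|\le\binom{n}{2}+\#\text{digons}$), and therefore contains a path $v_1,\ldots,v_k$ on $k$ vertices. So one may assume $D$ contains a path in which every consecutive pair is a $2$-cycle.

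Second, the paper deletes \emph{two} vertices rather than one. Working inside the tournament-like structure on $\{v_1,\ldots,v_k\}$, the no-$\overrightarrow{C_k}$ hypothesis forces the combined in- and out-degrees of the endpoints to be small; after a symmetry choice one passes to $P'=v_2,\ldots,v_k$ and bounds the arcs incident to $v_2$ or $v_k$ inside $P'$ by $3k-7$. Outside $P'$, any vertex that is simultaneously an in-neighbour of $v_2$ and an out-neighbour of $v_k$ (or vice versa) would close a directed $k$-cycle through $P'$, so those two degree sums are each at most $n-k+1$. Altogether $v_2$ and $v_k$ are incident to at most $2(n-k+1)+3k-7=2n+k-5$ arcs, which is exactly the drop in the target bound when $n$ decreases by $2$, and induction on $D\setminus\{v_2,v_k\}$ finishes.

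So the missing idea in your plan is precisely this: look at the digon subgraph to locate a $k$-vertex path of $2$-cycles, and remove a \emph{pair} of vertices from it rather than trying to squeeze a single low-degree vertex out of a longest directed path.
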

\begin{proof}
	Fixing $k \ge 3$ (the case $k=2$ is trivial), the proof proceeds by induction on $n$.
	As the cases $n \le k-1$ clearly hold, we assume that $n \ge k$.
	Since every $n$-vertex undirected graph with more than $n(k-2)/2$ edges has a path on $k$ vertices, we may assume that $D$ has a path $P=v_1,\ldots,v_k$ such that all consecutive pairs
	on this path induce directed cycles of length $2$. Furthermore, if the subgraph induced by $v_1,\ldots,v_k$ does not contain
	a directed $k$-cycle, then the sum of the out-degrees of $v_1$ and $v_k$ inside this subgraph is
	at most $k-1$ and the sum of the in-degrees of $v_1$ and $v_k$ inside this subgraph is
	at most $k-1$. So, without loss of generality, we can assume that in the subgraph induced by $P'=v_2,\ldots,v_k$, the number of arcs incident with $v_k$ is at most $k-1$. The number of arcs incident with either $v_2$ or $v_k$ in $P'$ is therefore at most $(k-1)+2(k-3)=3k-7$.
	If there is some vertex outside of $P'$ that is an in-neighbor of $v_2$
	and an out-neighbor of $v_k$ or vice versa, we have a directed $k$-cycle in $D$.
	Thus, assume that the sum of the in-degree of $v_2$ and the out-degree of $v_k$ with respect to the vertices outside of $P'$ is at most $n-k+1$. Similarly, the sum of the in-degree of
	$v_k$ and the out-degree of $v_2$ with respect to the vertices outside of $P'$ is at most $n-k+1$.
	Thus, the total number of arcs incident with $v_2$ or $v_k$ in all of $D$
	is at most $2(n-k+1)+3k-7=2n+k-5$. By induction, the directed graph obtained from
	$D$ by deleting the vertices $v_2$ and $v_k$ either has a directed $k$-cycle, or has at most
	$(n-2)(n-3)/2+(k-2)(n-2)/2$ arcs. It follows that the number of arcs of $D$ is at most
	$$
	\frac{(n-2)(n-3)}{2}+\frac{(k-2)(n-2)}{2}+2n+k-5=\frac{n(n-1)}{2}+\frac{(k-2)n}{2}\;.
	$$
\end{proof}

We construct a probability space of tournaments having the property that a sampled element of it
satiates the statement of Theorem \ref{t:lower}. We require a classical theorem of Wilson \cite{wilson-1975}
that proves, in particular, that for all sufficiently large $n$ satisfying $n \equiv 1 \pmod {2k}$,
the edges of $K_n$ can be decomposed into pairwise edge-disjoint copies of $C_k$. Given such an $n$ and a decomposition of its edges into a set $\cal{C}$ of
edge-disjoint copies of $C_k$, independently orient each element of $\cal{C}$ 
to obtain a directed $k$-cycle, where each of the two possible directions is chosen at random.
The obtained $n$-vertex tournament $T$ is therefore regular and, by definition,
$\nu_k(T)=n(n-1)/2k$. As trivially $\nu^*_k(T) \le |E(K_n)|/|E(C_k)|=n(n-1)/2k$, we also have
$\nu^*_k(T) = n(n-1)/2k$. We next show that asymptotically almost surely, $\tau_k(T) = n^2/4-o(n^2)$,
thus proving Theorem \ref{t:lower}. Since every directed graph has an acyclic subgraph consisting of at least half of its arcs, it suffices to prove that asymptotically almost surely, $\tau_k(T) \ge n^2/4-o(n^2)$. To this end, we need the following lemma in which the notation $e(A,B)$ denotes the number of arcs of going from
vertex set $A$ to vertex set $B$.
\begin{lemma}\label{l:dense-pairs}
	Asymptotically almost surely, for every pair of disjoint sets $A,B$ of vertices of $T$ of order at least $n^{2/3}$ each, both $e(A,B)$ and $e(B,A)$ are at most $(1+o_n(1))|A||B|/2$.
\end{lemma}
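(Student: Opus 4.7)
The plan is to exploit the independence of the random orientations across distinct cycles of Wilson's decomposition, apply Hoeffding's inequality cycle-by-cycle, and then union-bound over all pairs $(A,B)$ of disjoint vertex subsets.

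Fix disjoint $A,B \subseteq V(T)$ of sizes $a=|A|$, $b=|B|$, and let $\mathcal{C}$ be the Wilson decomposition of $E(K_n)$ into $|\mathcal{C}|=n(n-1)/(2k)$ copies of $C_k$. For each $C \in \mathcal{C}$, fix in advance one of its two directed orientations as a reference, and let $a_C$ (resp.\ $b_C$) be the number of arcs of $C$ that, in this reference orientation, go from $A$ to $B$ (resp.\ from $B$ to $A$). Let $X_C$ denote the number of arcs from $A$ to $B$ contributed by $C$ to $T$. By construction $X_C \in \{a_C,b_C\}$, each value attained with probability $1/2$, and the variables $\{X_C : C \in \mathcal{C}\}$ are mutually independent. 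Since every edge of $K_n$ between $A$ and $B$ lies in exactly one $C\in\mathcal{C}$, one has $e(A,B)=\sum_C X_C$ and
\[
\mathbb{E}[e(A,B)] \;=\; \sum_C\frac{a_C+b_C}{2} \;=\; \frac{ab}{2}.
\]

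I would then apply Hoeffding's inequality to the independent bounded variables $X_C$:
\[
\Pr\!\left[\,\bigl|e(A,B)-ab/2\bigr| \ge t\,\right] \;\le\; 2\exp\!\left(-\frac{2t^2}{\sum_C (a_C-b_C)^2}\right).
\]
The crucial step is the variance estimate
\[
\sum_C (a_C-b_C)^2 \;\le\; \sum_C (a_C+b_C)^2 \;\le\; k\sum_C (a_C+b_C) \;=\; k\,ab,
\]
which uses only $a_C+b_C\le k$ and that the cycles in $\mathcal{C}$ partition the edges between $A$ and $B$. Choosing $t=\varepsilon\,ab$ and invoking $ab\ge n^{4/3}$ gives a tail bound of at most $2\exp(-2\varepsilon^2 ab/k) \le 2\exp(-\Omega_{k,\varepsilon}(n^{4/3}))$.

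Finally, since the number of pairs of disjoint vertex sets in $T$ is at most $3^n = e^{O(n)}$ (each vertex lies in $A$, $B$, or neither) and $n^{4/3}$ asymptotically dominates $n$, a union bound yields that with probability $1-o(1)$ the estimate $|e(A,B)-ab/2|\le\varepsilon\,ab$ holds simultaneously for every pair of disjoint sets of size at least $n^{2/3}$. Letting $\varepsilon=\varepsilon(n)\to 0$ slowly (e.g.\ $\varepsilon=n^{-1/6}$) delivers $e(A,B)\le (1+o_n(1))\,ab/2$, and the matching bound on $e(B,A)=ab-e(A,B)$ follows immediately. The main obstacle is obtaining a variance estimate strong enough to defeat the $e^{O(n)}$ union bound: the crude $|a_C-b_C|\le k$ applied across all $\Theta(n^2/k)$ cycles only produces tail decay $\exp(-\Omega(n^{2/3}))$, which is too weak; the refinement $\sum_C (a_C-b_C)^2 \le k\,ab$ is precisely what upgrades this to $\exp(-\Omega(n^{4/3}))$ and allows the union bound to go through.
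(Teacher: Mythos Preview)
Your proof is correct and follows essentially the same route as the paper: both express $e(A,B)-ab/2$ as a sum of independent mean-zero contributions indexed by the cycles of the Wilson decomposition, observe that only cycles with an edge between $A$ and $B$ contribute, and use this to bound the Hoeffding denominator by $O(k\,ab)$ (you via $\sum_C(a_C+b_C)^2\le k\sum_C(a_C+b_C)=k\,ab$, the paper via $|\mathcal{C}'|\le ab$), yielding decay $\exp(-\Omega(ab))\le\exp(-\Omega(n^{4/3}))$ that beats the $e^{O(n)}$ union bound. The presentations differ only cosmetically.
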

\begin{proof}
	We prove that $e(A,B)$ is tightly concentrated around its expected value, $|A||B|/2$.
	Let $\cal{C}' \subseteq \cal{C}$ be the set of elements of $\cal{C}$ containing at least one edge with endpoints in both $A$ and $B$. Every $C \in \cal{C}'$, being a copy of $C_k$, contains some $1 \le r \le k$ edges with endpoints in both $A$ and $B$. When orienting $C$ to obtain a directed $k$-cycle, some $0 \le s \le r$ of its edges
	become arcs going from $A$ to $B$ and the remaining $r-s$ edges become arcs going from $B$ to $A$,
	or vice versa. Thus, we may associate $C$ with the random variable $X_C$ such that
	$X_C=s-r/2$ with probability $\frac{1}{2}$ and $X_C=r/2-s$ with probability $\frac{1}{2}$
	noticing that
	$$
	e(A,B) = \frac{|A||B|}{2}+\sum_{C \in \cal{C}'} X_C\;.
	$$
	We observe that the $|\cal{C}'|\le |A||B|$ random variables $X_C$ are independent, each having expectation $0$ and $|X_C|=|r/2-s|< k$. So, by the Chernoff inequality A.1.16 in \cite{AS-book},
	$$
	\Pr\left[\sum_{C \in \cal{C}'} X_C > k(|A||B|)^{0.9}\right] \le e^{-(|A||B|)^{1.8}/2|\cal{C}'|}
	\le e^{-(|A||B|)^{0.8}/2} < \frac{1}{5^n}
	$$
	where in the last inequality we have used that $|A||B| \ge n^{4/3}$.
	Thus, with probability at least $1-1/5^n$, $e(A,B) \le (1+o_n(1))|A||B|/2$.
	As there are less than $4^n$ choices for pairs $A,B$ to consider, the result follows from the union bound.
\end{proof}
The rest of our argument is similar to the proof in \cite{CGKK-2022} for directed triangles.
As the proof uses the regularity lemma for directed graphs, it requires a few definitions.
We say that a pair of disjoint nonempty vertex sets $A$, $B$ of a directed graph are {\em $\varepsilon$-regular}
if for all $X \subseteq A$ and $Y \subseteq B$ with $|X|\ge \varepsilon|A|$ and $|Y|\ge \varepsilon |B|$,
$$
\left|\frac{e(X, Y)}{|X||Y|}-\frac{e\left(A, B\right)}{\left|A\right|\left|B\right|}\right| \leq \varepsilon \text { and }\left|\frac{e(Y, X)}{|X||Y|}-\frac{e\left(B, A\right)}{\left|A\right| \left|B\right|}\right| \leq \varepsilon\;.
$$
An $\varepsilon$-regular partition of a directed graph $D$ is a partition of its vertices into sets $V_1, \ldots , V_\ell$ such that $\ell \ge \varepsilon^{-1}$, $||V_i|-|V_j||\le 1$ for all $i,j\in[\ell]$,
and all but $\varepsilon\ell^2$ pairs $V_i,V_j$ are $\varepsilon$-regular.
The directed version of Szemer\'edi's regularity lemma, first used implicitly in \cite{CPR-1999} and proved in \cite{AS-2004}, states that for every $\varepsilon > 0$ there exists $K(\varepsilon)$ such that every directed graph $D$ with at least $\varepsilon^{-1}$ vertices has an $\varepsilon$-regular partition with at most $K(\varepsilon)$ parts. A useful notion is the {\em reduced arc-weighted directed graph} $R$ corresponding to a given $\varepsilon$-regular partition. It has vertex set $[\ell]$ and if the parts $V_i$, $V_j$ form an
$\varepsilon$-regular pair, then $R$ contains an arc $(i,j)$ with weight $e(V_i,V_j)/(|V_i||V_j|)$
and an arc $(j,i)$ with weight $e(V_j,V_i)/(|V_i||V_j|)$. 

\begin{proof}[Proof of Theorem \ref{t:lower}]
	We prove that asymptotically almost surely, $\tau_k(T) \ge n^2/4-o(n^2)$. 
	Fix $\varepsilon > 0$. By Lemma \ref{l:dense-pairs}, we may assume that $T$ has the property
	that for every pair of disjoint sets $A,B$ of vertices of $T$ of order at least $n^{2/3}$ each, it holds that $e(A,B) \le (1+o_n(1))|A||B|/2$ and $e(B,A) \le (1+o_n(1))|A||B|/2$.
	Let $F$ be a set of arcs such that $T \setminus F$ has no directed $k$-cycle.
	Consider an $\varepsilon$-regular partition of the directed graph $T \setminus F$  with $\ell \le K(\varepsilon)$ parts and the corresponding reduced arc-weighted directed graph $R$.
	Let $w_R$ be the sum of the weights of the arcs of $R$. Observe that
	$$
	|E(T \setminus F)| \le \left(\frac{w_R}{\ell^2}+4\varepsilon \right)n^2
	$$
	where the error term $4\varepsilon n^2$ generously accounts for the arcs inside parts and the arcs between non-$\varepsilon$-regular pairs (we are using the fact that each part is of size either
	$\lfloor n/\ell \rfloor$ or
	$\lceil n/\ell \rceil$ and that $\ell \ge \varepsilon^{-1}$).
	Let $R'$ be the directed graph obtained from $R$ by removing all arcs with weight at most $k\varepsilon$
	so now the sum of the weights of the arcs of $R'$ is at least $w_R-k\varepsilon \ell^2$.
	Now, if $R'$ contained a directed $k$-cycle, then so would $T \setminus F$. Indeed, suppose, without loss of generality, that the $k$-cycle in $R'$ is $(1,\ldots,k)$. Then we can use the $\varepsilon$-regularity of the pairs $V_i,V_{i+1}$ for $i=1,\ldots,k$ (indices modulo $k$) and the fact that
	$e(V_i,V_{i+1}) \ge k\varepsilon |V_i||V_{i+1}|$ to embed (many) directed $k$-cycles in
	$T \setminus F$, each of the form $(v_1,\ldots,v_k)$ where $v_i \in V_i$.
	Hence, $R'$ has no directed $k$-cycle and therefore has at most $\ell^2/2+\ell k$ arcs by Lemma \ref{l:HT}. Now, by the property of $T$ stated in the beginning of the proof, each arc of $R$
	has weight at most $1/2+o_n(1)$. It follows that
	$$
	w_R \le k\varepsilon \ell^2 + (\ell^2/2+\ell k)(1/2+o_n(1)) \le \left(\frac{1}{4}+2k\varepsilon \right)\ell^2
	$$
	implying that $|E(T \setminus F)| \le (1/4 +4k\varepsilon)n^2$, implying that
	$|F| \ge n^2(1/4-4k\varepsilon-o_n(1))$. As this holds for every choice of $F$ which covers all directed $k$-cycles, we obtain that $\tau_k(T) \ge (1/4-4k\varepsilon-o_n(1))n^2$, for every $\varepsilon > 0$.
	It follows that $\tau_k(T) \ge n^2/4-o(n^2)$.
\end{proof}

It should be noted that in order to prove that the constant in Conjecture \ref{conj:1} cannot be made smaller than $k/2$, it suffices to prove, say, that there are $n$-vertex tournaments $T$ (not necessarily regular
tournaments) for which $\tau_k(T) \ge n^2/4-o(n^2)$ as trivially 
$\nu^*_k(T) \le n(n-1)/2k$ for every tournament. In fact, almost all tournaments
are good examples, as a random tournament (where each arc is independently and randomly oriented) satisfies $\tau_k(T) \ge n^2/4-o(n^2)$ asymptotically almost surely.
The proof is identical to the proof of Theorem \ref{t:lower} except for Lemma \ref{l:dense-pairs}
which can be replaced with a standard concentration inequality for the binomial distribution.
We also note that it is not difficult to prove that random tournaments satisfy $\nu_k(T) = (1-o_n(1))n^2/2k$ asymptotically almost surely (so they cannot be used as counter-examples to Conjecture \ref{conj:1}).

Both \cite{CGKK-2022,MPT-2020} constructed sparse examples exhibiting the sharp tightness of Conjecture \ref{conj:1} in the case $k=3$ of directed triangles (recall again that the case $k=3$ of Conjecture \ref{conj:1} is stated in \cite{MPT-2020}).
For example, the unique regular tournament
$R_5$ on five vertices has $\nu_3(R_5)=2$ and $\tau_3(R_5)=3$. One can then take many vertex-disjoint
copies of $R_5$ to obtain infinitely many sparse constructions attaining the ratio $1.5$.
Alternatively one can take a transitive tournament on any amount of vertices and replace any number
of pairwise vertex-disjoint subtournaments on five vertices of it with copies of $R_5$ to obtain additional examples attaining the $1.5$ ratio. We note that a similar argument holds for the case $k=4$. Indeed, $\nu_4(R_5)=1$ (since $K_5$ does not have two edge-disjoint copies of $C_4$). While any single arc of $R_5$ does not cover all directed $4$-cycles, it is easy to check that one can remove two arcs and cover all directed $4$-cycles of $R_5$.
Hence, $\tau_4(R_5)=2$. It follows that there are infinitely many constructions that attain the ratio $2$ for the case $k=4$. Whether there exist constructions attaining the exact ratio $k/2$ for $k \ge 5$ remains open.

\section*{Acknowledgment}
I thank both referees for insightful comments.

\end{document}